\documentclass[a4paper,11pt,oneside,reqno]{amsart}
\usepackage{amsmath,amsthm, amssymb,  mathrsfs, graphicx,color}

\usepackage[T1]{fontenc}
\usepackage{lmodern}

\newtheorem{Theorem}{Theorem}
\newtheorem{Corollary}{Corollary}

\newtheorem{Proposition}{Proposition}
\newtheorem{Condition}{Condition}

\theoremstyle{remark}
\newtheorem{remark}{Remark}

\theoremstyle{definition}

\setlength{\parindent}{2.3ex}
\setlength{\parskip}{0.7ex}
\setlength{\topmargin}{0cm}
\setlength{\textwidth}{151mm}
\setlength{\oddsidemargin}{5mm}
\setlength{\textheight}{218mm}

\makeatletter
\def\@secnumfont{\bfseries}
\def\section{\@startsection{section}{1}\z@{.7\linespacing\@plus\linespacing}
{.5\linespacing}{\large\bfseries\centering}}

\def\@settitle{\noindent\center\parbox{.83333\textwidth}
{\baselineskip143\p@\relax\center\bfseries\Large\@title}}
\def\@setauthors{
\begingroup
  \trivlist\@topsep 3ex
  	\item\relax\center\author@andify\authors{\authors}%
  \endtrivlist
\endgroup\vspace{1ex}
}

\renewcommand{\abstract}[1]{{\center\parbox[t]{.89\textwidth}{\small{\bfseries \abstractname:\ }#1}\\}\vspace{3ex}}
\makeatother

\newcommand{\coco}[1]{\mbox{\rm(#1)}}

\newcommand{\nline}{\\&&\quad}
\newcommand{\itc}[2]{\item[{\rm(\ref{#1}$_#2$)}]\refstepcounter{enumi}}
\newcommand{\cd}[1]{Condition \coco{\ref{#1}}}

\newcommand{\CD}[2]{\coco{\ref{#1}$_{\ref{#2}}$}}


\newcommand{\fig}[2]{
\begin{center}
\includegraphics[width=#1\textwidth]{#2}
\end{center}
}

\newcommand{\figc}[3]{
\begin{center}
\includegraphics[width=#1\textwidth]{#2}
{{\it\small Fig.: #3}}\\
\end{center}
}

\newcommand{\fac}[1]{#1^+}

\newcommand{\kla}[1]{(#1)}
\newcommand{\bkla}[1]{\big(#1\big)}
\newcommand{\Bkla}[1]{\Big(#1\Big)}
\newcommand{\beck}[1]{\big[#1\big]}

\newcommand{\nor}[1]{\lVert#1\rVert}


\newcommand{\PPPP}{P}
\newcommand{\PPKK}[1]{(#1)}
\newcommand{\EEEE}{E}
\newcommand{\EEKK}[1]{(#1)}

\newcommand{\PP}[1]{\PPPP\PPKK{#1}}
\newcommand{\PPP}[1]{\PPPP_{#1}}
\newcommand{\PX}[2]{\PPP{#1}\PPKK{#2}}

\newcommand{\EE}[1]{\EEEE\EEKK{#1}}
\newcommand{\EEE}[1]{\EEEE_{#1}}
\newcommand{\EX}[2]{\EEE{#1}\EEKK{#2}}

\newcommand{\dis}[2]{\partial_{#1}#2}

\newcommand{\abscon}{\ll}
\newcommand{\radon}[2]{\frac{d#1}{d#2}}
\newcommand{\radons}[2]{d#1/d#2}
\newcommand{\res}[2]{{#1}\vert_{#2}}

\newcommand{\ad}[1]{#1^\star}

\newcommand{\oper}[1]{{\mathcal{#1}}}
\newcommand{\opset}[1]{{\mathscr #1}}

\newcommand{\borel}[1]{{\mathscr B}(#1)}

\newcommand{\gen}{{\oper A}}
\newcommand{\xen }{{\oper X}}

\newcommand{\qen}{{\oper Q}}

\newcommand{\ten}[1]{\widehat #1}

\newcommand{\mls}[1]{\mu_{#1}}
\newcommand{\ml}[2]{\mls{#1}(#2)}

\newcommand{\adml}[2]{\ad \mu_{#1}(#2)}

\newcommand{\KK}[2]{\theta(#1,#2)}
\newcommand{\mlm}[2]{\mu_{#1}^{\prime}(#2)}
\newcommand{\admlm}[2]{\mu_{#1}^{\star\prime}(#2)}
\newcommand{\MM}[2]{\overline\theta(#1,#2)}


\newcommand{\xixs}{\xi}
\newcommand{\xix}{\xixs}

\newcommand{\AFT}{Q}
\newcommand{\BEF}{W}
\newcommand{\IMB}{Z}

\newcommand{\stas}{\pi}
\newcommand{\sta}{\stas}

\newcommand{\stbef}{\stas_{\BEF}}
\newcommand{\staft}{\stas_{\AFT}}

\newcommand{\cv}[2]{\Phi_{#1}^{#2}}
\newcommand{\cvv}[1]{\Phi_{#1}^+}

\newcommand{\adsta}{\ad\stas}

\newcommand{\tact}{\tau}
\newcommand{\tim}[2]{\tau(#1,#2)}
\newcommand{\adtim}[2]{\ad\tau(#1,#2)}



\newcommand{\la}{\lambda}

\newcommand{\La}[2]{\Lambda_{#2}(#1)}

\newcommand{\adla}{\lambda^{\!\star}}

\newcommand{\adXX}[1]{\ad X_t}
\newcommand{\ind}[1]{1_{\{#1\}}}

\newcommand{\leb}{\ell}

\newcommand{\veps}{\varepsilon}

\newcommand{\hh}{\varphi}

\newcommand{\boundary}{{\partial\sE}}

\newcommand{\ace}{{\Gamma}}
\newcommand{\act}{\ace}
\newcommand{\acm}{\ad\ace}

\newcommand{\sts}{{\mathbb E}}
\newcommand{\sE}{\sts}
\newcommand{\sEE}{\sts^\circ}
\newcommand{\sEA}{\ad\sE}
\newcommand{\sEI}{\sts_{\IMB}}

\newcommand{\dom}[1]{{\opset D}(#1)}

\newcommand{\mes}{{\opset M}}
\newcommand{\mabs}{{\opset M}'}
\newcommand{\bacon}{{\opset M}'_{b}}

\newcommand{\sN}{{\mathbb N}}
\newcommand{\sR}{{\mathbb R}}

\newcommand{\por}[3]{p(#1;#2,#3)}
\newcommand{\adpor}[3]{\ad p(#1;#2,#3)}

\newcommand{\pdmp}{PDMP}

\newcommand{\seqi}[2]{(#1_{#2})_{#2=1,2,\ldots}}
\newcommand{\seq}[2]{(#1_{#2})_{#2\geq 0}}
\newcommand{\pd}[1]{\frac{\partial}{\partial #1}}

\newcommand{\afrac}[2]{#1/#2}

\newcounter{modd}
\renewcommand{\themodd}{\Roman{modd}}
\newcounter{submodd}
\renewcommand{\thesubmodd}{\Roman{modd}\alph{submodd}}

\newenvironment{vemodd}{\setcounter{submodd}{0}\vspace{3mm}\par\noindent\refstepcounter{modd}{\bf Model \themodd\ }}{}
\newenvironment{vemoddn}[1]{\setcounter{submodd}{0}\vspace{3mm}\par\noindent\refstepcounter{modd}{\bf Model \themodd}\;(#1)}{}
\newenvironment{vemoddd}{\vspace{3mm}\par\noindent\refstepcounter{submodd}{\bf Model \thesubmodd\ }}{}

\begin{document}

\title[Time Reversal of PDMPs]{On Time Reversal of Piecewise Deterministic Markov Processes}
\author{Andreas L\"opker}
\address{Helmut Schmidt University Hamburg, Postfach 700822, 22008 Hamburg}
\email{lopker@hsu-hh.de}
\author{Zbigniew Palmowski}
\address{University of Wroc\l aw, pl.\ Grunwaldzki 2/4, 50-384 Wroc\l aw, Poland}
\email{zpalma@math.uni.wroc.pl}
\date{\today}
\maketitle

\abstract{We study the time reversal of a general \pdmp. The time reversed process is defined as $X_{(T-t)-}$, where $T$ is some given time and $X_t$ is a stationary \pdmp. We obtain the parameters of the reversed process, like the jump intensity and the jump measure.}

\section{Introduction}\label{intro}

The aim of this paper is to introduce the time reversal of a general piecewise deterministic Markov process (PDMP). Given a stationary version $X_t$  of such a process we let $\ad X_t=X_{(T-t)-}$ and study the characteristics of this reversed process.

 The concept of reversing the direction of time for Markov processes can be found already in the works of Kolmogorov (\cite{kolmo1,kolmo2} and even earlier \cite{schroedinger}). The general idea is to study the process $\ad X_t=X_{(T-t)-}$, where $T>0$ is either a fixed point in time or a suitably defined random time. However, it is not clear whether the process $\ad X_t$ is a Markov process at all and if it is, whether properties like time-homogeneity, the strong-Markov property and others hold. Such questions have been answered until the 1970s  in several publications regarding time reversion (see e.g. \cite{Naga, Nag2,chungwalsh,walsh}). Other researchers applied time reversal to special classes of Markov processes, like L\'evy processes (\cite{JAPR}), stochastic networks (\cite{ Kellybook}), birth and death processes (\cite{tanaka89}) and Markov chains (\cite{norris}).

\pdmp s evolve deterministically on an open subset of $\sR^d$, interrupted by random jumps that happen either inside the state space or at the boundary. Piecewise deterministic paths can be observed for Markov processes in a variety of applications.  We  mention risk process
(\cite{HT, Rolski, DE,  dassjang, EMS}), growth collapse and stress
release models (\cite{BV, Ver, Zheng, Bea, AL-WOLFGANG}), queueing models (\cite{BrowneSigman, onoff}), earthquake models (\cite{ogata}), repairable systems (\cite{LastSzekli}), storage models (\cite{Cinlar1,HarRes,Harris2}) and TCP data transmission (\cite{AL-TRANS, AL-ISO, dumas}). The mathematical framework that is used in this paper was introduced by Davis \cite{Davis, Davis0}. Other approaches to PDMPs and related processes can be found in \cite{jask,wobst, schal1,schal2}, see also \cite{bono, Costa, DC, palmrol}. However, it seems that time-reversal has not been a subject of detailed study for PDMPs, at least not in a general framework (see \cite{fagg} for time-reversal arguments for a special subclass). To fill this gap we study the reversed process of a general PDMP, allowing also for what is called a non-empty active boundary, i.e. forced jumps that occur whenever the process hits the boundary of its state space.

This paper is divided into three sections. In the first section, after recalling the definition of \pdmp s, we investigate the imbedded Markov chains $\seqi{\BEF}{k}$ and $\seqi{\AFT}{k}$ obtained by observing the process just before and right after the jumps. Among other things we derive a formula for the stationary distribution of $\IMB_k=(\BEF_k,\AFT_k)$. Also in Section 1 we derive some sufficient conditions to ensure that the stationary distribution $\nu$ of the original PDMP is absolutely continuous.

In Section 2 we define the reversed Markov process $\seq{\ad X}{t}$ (we use an asterisk to denote variables related to the reversed process) and show that it is a PDMP. Moreover, we prove that $\sta(A,B)=\ad\sta(B,A)$, where $\sta$ and $\ad\sta$ denote the stationary  distribution of $\IMB_k$ and $\ad\IMB_k$. More specifically we find the crucial relation
\begin{align}
\ml x{dy}(\la(x)\nu(dx)+\sigma(dx))
=
\adml y{dx}(\ad\la(y)\nu(dy)+\ad\sigma(dy)),
\end{align}
where $\sigma$ denotes the measure that is equal to the average number of visits to the boundary.
This formula  can be used to derive the jump intensity $\adla(x)$ and the jump measure $\adml xA$ of the reversed process
-- one of the main aims of the paper. We also rediscover that in our setting the well known property of the generator $\ad\gen$ of $\ad X$ to be the adjoint of $\gen$ holds.

Section 3 is devoted to PDMPs on the real line, that is \pdmp s with state space $E\subseteq\sR$. We derive an integral equation for the stationary distribution and study some special cases.

We will introduce certain conditions for the process to hold. More specifically, we have conditions (A) to ensure that $X_t$ is a proper PDMP, conditions (B) and (D) to be able to revert the process and condition (C) to obtain an absolutely continuous stationary distribution.

\subsection{Preliminaries}\label{prel}
Throughout the article we use the following notations. We let $\leb_{d}$ denote be the $d$-dimensional Lebesgue measure and write $\mu_1\abscon\mu_2$ if a measure $\mu_1$ is absolutely continuous with respect to another measure $\mu_2$. In this case the symbol $\radons{\mu_1}{\mu_2}$ stands  for the Radon-Nikodym derivative. If $\mu_2$ is the Lebesgue-measure we shortly write $\mu_1'$ instead of $\radons{\mu_1}{\leb}$. The same notation $f'$ is used for the Radon-Nikodym derivative of an absolutely continuous function $f$. Given some measure $\mu$ we denote $\res{\mu}{A}(B)=\mu(A\cap B)$ the restriction of $\mu$ to $A$. 

We use the abbreviations $\EEE{\mu}$ and $\PPP{\mu}$ for the expectation and probability, given that the process $X_t$ starts with initial probability distribution $\mu$ (which will often be the stationary distribution $\nu$). In particular, if $\mu(\{x\})=1$ then we write $\EEE{x}$ and $\PPP x$. Given a set $A\subseteq\sR^d$, $\borel{A}$ indicates the Borel-$\sigma$-field of subsets of $A$.

The typical feature of a \pdmp\ is of course its eponymous path that consist of random jumps and piecewise deterministic segments. The jumps are steered by a jump intensity function, allowing the jump times to depend on the current state of the process, and a jump measure, determining the distribution of the destination of the random jumps. Additionally the process is allowed to change its state continuously between the jumps. In general, if $\hh(x,t)$ denotes the position of the process at time $t$, given that there were no jumps and that the process started in $x$, the appropriate condition on $\hh$ in order to obtain a deterministic time homogeneous Markov process is to form a flow, that is to fulfil the relation $\hh(\hh(x,t),s)=\hh(x,s+t)$ for $s,t\geq 0$ (see e.g. \cite{jacobsen,schal2}). If one wants to keep it general, this would be the only restriction to the deterministic paths and no further regularity and continuity conditions have to be imposed.

In this paper we instead follow the more restrictive (and more popular) set-up demonstrated in the book of Davis \cite{Davis} (see also \cite{Costa,Davis0, Rolski}), where the deterministic paths of the process are governed by a differential equation as follows. Let $\sEE_i\subseteq\sR^{d_i}$, $d_i\in\sN$, $i\in K=\{1,2,\ldots,k\}$, $k\in\sN$ be a collection of open sets and let  $\boundary_i$ denote the boundary of $\sEE_i$. We assume that on  each component $\sEE_i$ there is a vector field
 defined with unique integral curve $\hh_i:\sEE_i\times\sR\to \sEE_i$, having no explosions. The idea is that, given that $X_0=(x,i)$ with $x\in\sE_i$ and there was no jump during $[0,t]$, the position of $X_t$ is given by $\hh_i(x,t)$. We assume that there are functions $r_i=(r_{i,1},\ldots, r_{i,d_i}):\sEE_i\to\sR$, locally Lipschitz-continuous, such that for all $x\in\sEE_i$, the integral curve $\hh(x,t)$ is the unique solution of the ordinary differential equation
$\pd t\hh_i(x,t)=r_i(\hh_i(x,t))$. For differentiable functions $f:\sE_i\to\sR$ we can represent the integral curve using the Lie derivative
\begin{align*}
\xen _i f(x)\
=\lim_{t\to 0}\pd tf(\hh_i(x,t))
=\sum_{j=1}^{d_i} r_{i,j}(x)\frac{\partial }{\partial x_j}f(x).
\end{align*}
More generally, we understand the symbol $\xen_i f(x)$  as a solution of
\begin{align*}
f(\hh_i(x,t))=f(x)+\int_0^t \xen _i f(\hh(x,s))\,ds,
\end{align*}
allowing $f$ to be only absolutely continuous. 
The process  $\seq Xt$ evolves on a subset of the disjoint union of the $\sEE_i\cup\boundary_i$ and the elements of $K$ represent the outer states of the process.  In order to give a complete specification of the  actual state space of the process we distinguish between the following subsets of the boundary $\boundary_i$. First, the so called active boundary of points that can be reached from within $\sEE_i$,
\begin{eqnarray*}
\act_i&=&\{z\in\boundary_i:z=\hh(x,t)\quad\mbox{for some $x\in\sEE_i$, $t>0$}\}
\end{eqnarray*}
and secondly, what we call (in lack of a more appropriate term) the passive boundary, the set of points on the boundary from where points in $\sEE_i$ can be reached:
\begin{eqnarray*}
\acm_i&=&\{z\in\boundary_i:z=\hh(x,-t)\quad\mbox{for some $x\in\sEE_i$, $t>0$}\},
\end{eqnarray*}
of course, as the asterisk indicates, this set will serve as active boundary for the reversed process. We also define
$\sE_i=\sEE_i\cup\acm_i$ and $\sEA_i=\sEE_i\cup\act_i$. For each set  $S_i\in\{\act_i,\acm_i,\sE_i,\sEA_i\}$ we define the disjoint union  $S=\{(x,i):i\in K,x\in S_i\}$. For example the state space of $X_t$ becomes
\begin{eqnarray*}
\sE=\{(x,i):i\in K,x\in\sE_i\},
\end{eqnarray*}
while the active boundary is $\act=\{(x,i):i\in K,x\in\act_i\}$. Before we proceed we agree on the following convention. We will throughout the paper omit the outer states, e.g. write $x\in\sE$ instead of $(x,i)\in\sE$, or $\xen  f(x)$ instead of $\xen _i f(x)$. Only if the notations are necessary to avoid ambiguity we well indicate the outer states.

As explained in the introduction, the process $X_t$ jumps at certain random times $\seqi{T}{i}$. This will either happen from within $\sE$ (voluntary jumps) or if the process hits the active boundary $\act$ (forced jumps). The times at which forced jumps occur are denoted by $\seqi{\fac T}{i}$. Let $\tact(x)=\inf\{t\geq 0: \hh(x,t)\in\act\}$ denote the first time the latter happens, given the process starts in $x\in\sE$ and no jumps occurred (with the usual convention that the infimum is equal to $\infty$ if there is no such $t$). Two random mechanisms determine the jumps:
the jump intensity is a non-negative, continuous measurable function $\la:\sE\to[0,\infty)$, with the interpretation that the probability of a jump during $[t,t+h]$, given that the process is in the state $x$, is $\la(x)h+o(1)$ and the probability of more than one jump is $o(1)$ as $h\to 0$.
Formally this will follow from the continuity of $\la$ and from the representation of the probability distribution of  the first jump time $T_1$  given by
\begin{equation}\label{Tone}
\PX x{T_1>t}=\ind{t<\tact(x)}\La{t}{x},
\end{equation}
where we use the abbreviation  $\La{t}{y}=\exp(-\int_0^t \la(\hh(y,u))\,du)$.
Secondly, the jump measure $\ml xA$ determines the probability of a jump from $x\in\sEA$ into a measurable set $A\subseteq\borel{\sE}$.
Let $N_t=\sup\{n:T_n\leq t\}$ denote the number of jumps (forced and voluntary) and $\fac N_t=\sup\{n:
\fac T_n\leq t\}$ the number of forced jumps occurring during $[0,t]$.

Throughout we assume the following standard conditions (c.f. \cite{Davis}):

\begin{Condition}\ \label{AA}
\begin{enumerate}
\itc{AA}1\label{AA1} For all $x\in\sE$ there is an $\veps(x)>0$, such that $\int_0^{\veps(x)} \la(\hh(x,s))\,ds<\infty$ (local integrability of $\la$ along $\hh$);
\itc{AA}2\label{AA2} $\int_0^{\infty} \la(\hh(x,s))\,ds=\infty$ for all $x\in\sE$ with $\tact(x)=\infty$ (first jump happens in finite time);
\itc{AA}3\label{AA3} $\EX x{N_t}<\infty$ for all $x\in\sE$ and all $t>0$ (there are no explosions);
\itc{AA}{4}\label{AA4} $\ml x{\{x\}}=0$ for all $x\in\sE$ (no jumps of size zero);
\end{enumerate}
\end{Condition}

Additionally we assume further conditions to ensure that we can define a proper reversed process and to be able to derive formulas for the parameters of the reversed process. In what follows we define
\begin{align*}
\dis hB=\{x\in\sE:\tim xB<h\}
\end{align*}
for any set
 $B\in\borel{\sE\cup\Gamma}$ to denote the locations from which the process can reach $B$ within $h$ time units.

\begin{Condition} In this paper we always assume the following conditions. \label{BB}
\begin{enumerate}
\itc{BB}1\label{BB1} There exists a stationary distribution $\nu$ of $X_t$ on $\sE$;
\itc{BB}2\label{BB2} $\int_{\sE} \la(x)\nu(dx)<\infty$;
\itc{BB}3\label{BB3} $\la$ is continuous;
\itc{BB}4\label{BB4} For any $A\in\borel\sE$ and every $x\in\sE$ we have $\mu_{\hh(x,h)}(A)\to \mu_x(A)$ as $h\to 0$.
\itc{BB}5\label{BB5} $\mu_{x}(\dis h{\act})\to 0$ uniformly for  $x\in \act$ as $h\to 0$.
\end{enumerate}
\end{Condition}

\begin{remark}
Condition \CD{BB}{BB1} assumes the existence of a stationary distribution of the process. This will be crucial for the definition of the time reversal in Section \ref{reverse}. Giving appropriate conditions for a general  PDMP to possess a stationary distribution, to be ergodic, positive recurrent or Harris recurrent is a difficult if not impossible task. Different efforts have been made to solve these problems for special cases (\cite{lastergo, Zheng, Costa}). We take the liberty to keep away from these intricate questions and just assume our process to posses a unique stationary distribution, being aware of the difficulties that we leave untouched.
\end{remark}

\begin{remark}
Condition \CD{BB}{BB5} prevents the process from becoming cascading near the boundary, i.e. to jump more and more often from $\Gamma$ into a smaller and smaller neighbourhood of $\Gamma$.
\end{remark}

\begin{remark}
With these conditions holding there are no hybrid jumps, that is, we do not allow the situation where there is an $x\in\sE$ such that $X_t$ jumps with a certain probability $p(x)$ once it reaches $x$ and with probability $1-p(x)$ it stays on the flow. This restriction becomes important for the reversed process.
\end{remark}

We will also need the following substitution rules. Let $\tim xy=t$ if $y=\hh(x,t)$ for some $t\geq 0$ and $\tim xy=\infty$ else. Then $\tim xy$ represents the time the process needs to run from $x$ to $y$ if no jumps occur. Let $\cv xy=\cup_{t\geq 0}\{\hh(x,s)\in\sE:s\in [0,t],\; {\rm and}\; y=\hh(x,t)\}$ and $\cvv x=\{\hh(x,s)\in\sE:s\in [0,\tact(x))\}$ denote the curve segments starting from $x$ and ending at $y$ and at the active boundary, respectively. Then we can rewrite the line integral over $\cv xy$ as
\begin{align}\label{u1}
\int_{\cv xy} f(u)\,du=\int_0^{\tim xy} f(\hh(x,s))|r(\hh(x,s))|\,ds,
\end{align}
where $|r(u)|=(\sum_{i=1}^d r_i^2(u))^{1/2}$. Similarly, provided that $|r(u)|>0$ on $\cv xy$, the integral over $f(\hh(x,s))$ with $s\in[0,\tim xy]$ can be written as
\begin{align}\label{u2}
\int_0^{\tim{x}y} f(\hh(x,s))\,ds=\int_{\cv xy} \frac{f(u)}{|r(u)|}\,du.
\end{align}
It follows in particular that 
$\tim{x}y=\int_{\cv xy} 1/{|r(u)|}\,du$.

Let $\mes$ denote the class of measurable functions $f:\sEA\to\sR$ and $\mabs$ the class of those members $f\in\mes$, for which  $t\to f(\hh(x,t))$ is absolutely continuous (so $f$ is absolutely continuous along the flow). We define the linear operator
\begin{align}
\gen f(x)&=\xen  f(x)+\la(x)\qen f(x)\label{generator},
\end{align}
where $\qen f(x)=\int_{\sE} \kla{f(y)-f(x)}\,\ml x{dy}$. $\gen$ is the
full generator of the Markov process $X_t$ and
we denote by $\dom\gen$ the domain  of $\gen$, so that for $f\in \dom\gen$ the process
$f(X_t)-f(X_0)-\int_0^t \gen f(X_s)\,ds$ becomes a martingale.
These functions are characterized in \cite[Th. (26.14) and Rem. (26.16)]{Davis}.
Note  that the class of bounded functions $f\in\mabs$ with $\qen f(x)=0$ for all $x\in\act$ is contained in the domain. We let $\bacon$ denote the bounded functions in $\mabs$. We note that for all $f\in\bacon$
\begin{align}
&f(X_t)-f(X_0)-\int_0^t \gen f(X_s)\,ds-\int_0^t \qen f(X_{s-})\,d\fac N_s\label{marti}
\end{align}
is also a martingale.

\subsection{Stationarity}

The process $X_{t-}$ will hit the active boundary $\act$ at certain times $\seq{\fac T}n$ and is then forced to jump. Let $\fac N_t(B)$ denote the number of hits of a set $B\in\borel{\act}$ during the time interval $[0,t]$ and $\fac N_t=N_t(\act)$. It is shown in \cite[Theorem 34.15]{Davis} (\cite[Proposition 2]{Costa}) that there exists a finite measure $\sigma$ on the active boundary $\act$ such that $\sigma$ measures the time-average of the number of visits to $B$, namely
$\sigma(B)=\EX{\nu}{\fac N_t(B)}/t$ for every $t>0$. That is, for any bounded function $f\in\mes$ and any $t>0$,
\begin{eqnarray}\label{boundary}
\int_{\act}f(x)\sigma(dx)=\frac{1}{t}\EX\nu{\int_0^tf(X_{s-})\,d\fac N_s}.
\end{eqnarray}
Many of the upcoming results of this paper are based on the following crucial relation. Taking expectations in \eqref{marti} with respect to $\nu$ and taking into account that $\EX\nu{f(X_t)}=\EX\nu{f(X_0)}$,  we obtain 
$t\EX\nu{\gen f(X_s)}+\EX\nu{\int_0^t \qen f(X_{s-})\,d\fac N_s}=0$ for every  $f\in\bacon$,
so that we have the identity (see \cite[Theorem 34.19]{Davis}):
\begin{equation}\label{zero}
\int_{\sE} \gen f(x)\nu(dx)+\int_{\act}\qen f(x)\sigma(dx)=0.
\end{equation}
Equation \eqref{zero} is the usual starting point to find expressions of $\nu$ in terms of the parameters $\la$ and $\mls x$ of the process. In general however there is not much hope to find explicit formulas and only for a few PDMP models the stationary distribution $\nu$ is explicitly known. Even in the one-dimensional case often all that can be done is to derive integro-differential equations for $\nu$ (see Section \ref{onedi} for examples) and even these equations give rise to challenging problems themselves.

Another interesting question is, whether it is possible to further describe the measure $\sigma$ and express it in terms of the stationary measure $\nu$ on $\sEE$. In fact,  $\sigma$ is determined by the values of $\nu$ on  arbitrary small neighbourhoods of $\act$ and for $B\in\borel{\act}$ the measure $\sigma(B)$ is given by the limit probability to find $X_t$ in short distance to $B$  (measured in time units needed to reach $B$). Let $\tim xB=\tim xy$ if $y=\hh(x,t)\in B$ for some $t\geq 0$ and $\tim xB=\infty$ else.

\begin{Proposition}\label{sigm} For $B\in\borel{\act}$ the measure $\sigma(B)$ is given by the limit
\begin{align}
\sigma(B)=\lim_{h\to 0}\frac{\nu(\dis hB)}{h}.\label{rice}
\end{align}
\end{Proposition}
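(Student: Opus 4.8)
The plan is to read $\sigma(B)$ off its defining property and to identify it, to leading order in $h$, with $\nu(\dis hB)$. By the characterisation recalled in \eqref{boundary} we have $\EX\nu{\fac N_h(B)}=h\,\sigma(B)$ for every $h>0$, so $\sigma(B)=\EX\nu{\fac N_h(B)}/h$, and it suffices to prove that $\nu(\dis hB)=\EX\nu{\fac N_h(B)}+o(h)$ as $h\to 0$; dividing by $h$ then yields \eqref{rice}.

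First I would split the expected number of boundary hits according to the first jump. Starting from $x\in\sE$, a hit of $B$ already at the first jump is necessarily a forced jump; since the active boundary is met only at the exit time of the flow, $\tim xB<\infty$ forces $B$ to be the first boundary set reached, so such a hit occurs in $[0,h]$ exactly when $x\in\dis hB$ and no voluntary jump precedes time $\tim xB$, an event of probability $\La{\tim xB}{x}$ by \eqref{Tone}. Applying the strong Markov property at the first jump time (\cite{Davis}) and invoking \CD{AA}{AA3} for finiteness, I obtain
\begin{align*}
\EX\nu{\fac N_h(B)}=A_h+R_h,\qquad A_h=\int_{\dis hB}\La{\tim xB}{x}\,\nu(dx),
\end{align*}
where $R_h$ is the expected number of hits of $B$ that occur strictly after the first jump.

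Next I would estimate the two terms. For $A_h$, continuity of $\la$ (\CD{BB}{BB3}) furnishes a bound $\Lambda$ for $\la$ on a neighbourhood of $B$, so that $0\le 1-\La{\tim xB}{x}\le\Lambda\,\tim xB\le\Lambda h$ on $\dis hB$ and hence $0\le\nu(\dis hB)-A_h\le\Lambda h\,\nu(\dis hB)$. Since the sets $\dis hB$ decrease, as $h\downarrow0$, to $\{x\in\sE:\tim xB=0\}\subseteq\act$, which carries no $\nu$-mass, continuity from above gives $\nu(\dis hB)\to0$ and therefore $\nu(\dis hB)-A_h=o(h)$. For $R_h$, every contributing hit is preceded by at least one earlier jump and thus requires two jumps inside $[0,h]$; the expected number of such coincidences is $o(h)$ once \CD{BB}{BB5} rules out the cascading behaviour in which a forced jump drops the process into an ever smaller neighbourhood of $\act$ and provokes a further boundary hit almost at once. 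This gives $R_h=o(h)$.

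Combining the estimates yields $\EX\nu{\fac N_h(B)}=\nu(\dis hB)+o(h)$, and letting $h\to0$ after division by $h$ proves \eqref{rice}. I expect the control of $R_h$ to be the real obstacle: one must show, uniformly over the post-jump location, that an immediate second boundary hit has vanishing relative probability, and it is exactly this estimate that \CD{BB}{BB5} is designed to supply.
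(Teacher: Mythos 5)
You take a genuinely different and non-circular route: instead of the paper's analytic argument, you exploit that \eqref{boundary} gives the exact identity $h\,\sigma(B)=\EX\nu{\fac N_h(B)}$ and reduce the claim to $\nu(\dis hB)=\EX\nu{\fac N_h(B)}+o(h)$ via a first-jump decomposition. The paper instead plugs the test function $f_h(x)=\bkla{1-\tim xB/h}\ind{x\in\dis hB}\in\bacon$ into the stationarity identity \eqref{zero}; since $\xen f_h=\ind{x\in\dis hB}/h$, the quantity $\nu(\dis hB)/h$ appears directly, and the remaining terms are dominated by $\int_\act\ml x{\dis hB}\sigma(dx)+\int_{\dis hB}\la(x)\nu(dx)$, which vanish by dominated convergence using \CD{BB}{BB2}, \CD{BB}{BB5} and $\nu(\Gamma)=0$ --- note that the paper's method never needs any pointwise or uniform bound on $\la$. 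Your identification of the first-jump hit probability $\La{\tim xB}{x}$ is correct (including the point that $\tim xB<\infty$ forces $\tact(x)=\tim xB$), and your treatment of $R_h$ is essentially the paper's own step 4) in the proof of \eqref{sh}: the forced-jump part is $o(h)$ by \CD{BB}{BB5}, but you should make explicit that the voluntary part, of order $h\int_{\sE}\la(x)\ml x{\dis hB}\,\nu(dx)$, is handled by \CD{BB}{BB2} and dominated convergence, not by \CD{BB}{BB5}.

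The genuine gap is in your estimate of $A_h$: the claim that continuity of $\la$ (\CD{BB}{BB3}) ``furnishes a bound $\Lambda$ for $\la$ on a neighbourhood of $B$'' is unjustified. Here $B$ is an arbitrary Borel subset of $\act$, neither $B$ nor $\act$ is assumed compact, and the standing conditions permit a continuous $\la$ that is unbounded near the boundary or at infinity; all you actually have quantitatively is $\int_\sE\la\,d\nu<\infty$ (\CD{BB}{BB2}). So the bound $1-\La{\tim xB}{x}\le\Lambda h$ can fail, and this step is not cosmetic: it is precisely where you obtain the rate $o(h)$ rather than the $o(1)$ that mere continuity from above of $\nu$ provides. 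The step can be repaired with the integrability you do have. Since $1-\La{\tim xB}{x}=\PX x{T_1<\tim xB}$ and, for $x\in\dis hB$, the event $\{T_1<\tim xB\}$ forces a voluntary jump in $[0,h)$, the compensator of the voluntary jump count gives
\begin{align*}
\nu(\dis hB)-A_h=\int_{\dis hB}\PX x{T_1<\tim xB}\,\nu(dx)
\leq\int_0^h \EX\nu{\la(X_s)\ind{X_0\in\dis hB}}\,ds,
\end{align*}
because $\{X_0\in\dis hB\}$ is measurable at time $0$. By stationarity the variables $\la(X_s)$, $0\le s\le h$, are identically distributed and integrable, hence uniformly integrable, so for every $M>0$ one has $\EX\nu{\la(X_s)\ind{X_0\in\dis hB}}\le\EX\nu{\la(X_0)\ind{\la(X_0)>M}}+M\,\nu(\dis hB)$; since $\nu(\dis hB)\to 0$ as $h\to 0$, the integrand tends to $0$ uniformly in $s\le h$ and the right-hand side is $o(h)$. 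With this replacement (and the dominated-convergence bound for the voluntary part of $R_h$ noted above) your argument goes through under exactly the paper's hypotheses.
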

\begin{proof}
The function $f_h:\sEA\to[0,1]$
\begin{align*}
f_h(x)=\bkla{1-\frac{\tim x B}{h}}\cdot\ind{x\in\dis hB}
\end{align*}
 is absolutely continuous, bounded and vanishes on $\sE\setminus \dis hB$;  in particular $f_h\in\bacon$ and
\begin{align*}
\xen  f_h(x)=\lim_{t\to 0}\frac{d}{dt}f_h(\hh(x,t))=-\frac{1}{h}\ind{x\in \dis hB}\lim_{t\to 0}\frac{d}{dt}\tim {\hh(x,t)} B=\frac{\ind{x\in \dis hB}}{h}.
\end{align*}
Application of \eqref{zero} to $f_h$ then yields
\begin{align*}
&\frac{1}{h}\nu(\dis hB)+\int_{\dis hB}\la(x)\bkla{\int_{\dis hB}\ml x{dy}-f_h(x)}\nu(dx)
\\&\quad\quad=-\int_{\act}\bkla{\int_{\dis hB}f_h(y)\ml x{dy}-f_h(x)}\sigma(dx),
\end{align*}
from which it follows that
\begin{align*}
\frac{1}{h}\nu(\dis hB)
=\sigma(B)-\int_{\act}\int_{\dis hB}f_h(y)\ml x{dy}\sigma(dx)-\int_{\dis hB}\la(x)\bkla{\ml x{\dis hB}-f_h(x)}\nu(dx).
\end{align*}
Note that
\begin{align*}
&\left|\int_{\act}\int_{\dis hB}f_h(y)\ml x{dy}\sigma(dx)-\int_{\dis hB}\la(x)\bkla{\ml x{\dis hB}-f_h(x)}\nu(dx)\right|
\\&\quad\leq \int_{\act}\ml x{\dis hB}\sigma(dx)+\int_{\dis hB}\la(x)\nu(dx).
\end{align*}
As $h\to 0$ the integrals on the right tend to zero by dominated convergence, where we used condition \CD{BB}{BB2} and $\nu(\Gamma)=0$.
\end{proof}

\subsection{The embedded processes}

To gain insight into the behaviour of the process it is useful to study the continuous time process $\seq Xt$ sampled at the jump times $\seqi Ti$. To this end we define the two-dimensional process $\IMB_k=(\BEF_k,\AFT_k)$, where  $\BEF_k=X_{T_{k-}}$ and $\AFT_k=X_{T_k}$ denote the embedded discrete time Markov processes, obtained by observing $X_t$ just after and right before the jumps. Note that due to the forced jumps the state space of $\BEF$ is $\sEA$, whereas $\AFT$ lives on $\sE$, so that $\sEI=\sEA\times\sE$ is the state space of $\IMB$.

If $X_t$ is stationary then still $\BEF_k$ and $\AFT_k$ are usually not stationary and vice versa. However,  if $\BEF_k$ is stationary then so is $\AFT_k$. If $\nu$ is the stationary distribution of $X_t$, then we will show how to find a stationary distribution for $\IMB_k$.

Let for $A\in\borel\sEA$ and $B\in\borel{\sE}$
\begin{eqnarray*}
\por hAB&=&\PX \nu{X_0\in A,\; X_{h}\in B,\;  T_1\in[0,h]}
\end{eqnarray*}
denote the probability that (under $\nu$) $X_0\in A$, $X_{h}\in B$ and the process is going to jump
during the time interval $[0, h]$.

\begin{Proposition}
For all $A\in\borel\sEA$ and $B\in\borel{\sE}$ the limit ${\por hAB}/{h}$ exists as $h\to 0$  and it is given by:
\begin{align}\label{sh}
\xix(A,B)=\lim_{h\to 0}\frac{\por hAB}{h}=\int_{A\cap\sE}\la(x)\ml x{B}\nu(dx)+\int_{A\cap\act}\ml x{B}\sigma(dx).
\end{align}
\end{Proposition}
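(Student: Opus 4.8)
The plan is to split $\por hAB$ according to the type of the first jump and to evaluate the two contributions separately, exploiting that as $h\to0$ the pre-jump position $X_{T_1-}$ collapses onto $X_0$ and the post-jump position onto $X_h$. Since $\{T_1\in[0,h]\}=\{N_h\ge1\}$, I would first note that, by the no-explosion condition \CD{AA}{AA3} (and \CD{BB}{BB5} near the boundary), the event $\{N_h\ge2\}$ contributes only $o(h)$, so it suffices to treat trajectories with exactly one jump in $[0,h]$. Conditioning on $X_0=x\sim\nu$ and using the first-jump law \eqref{Tone} together with the strong Markov property at $T_1$, that jump is either voluntary --- occurring at a time $s<\tact(x)$ with hazard $\la(\hh(x,s))$ --- or forced, occurring at the boundary point $\hh(x,\tact(x))\in\act$ exactly when $\tact(x)\le h$. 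This gives $\por hAB=V_h+F_h+o(h)$.

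For the voluntary term, $\nu(\act)=0$ forces $x\in A\cap\sE$, and
\begin{align*}
V_h=\int_{A\cap\sE}\nu(dx)\int_0^{h\wedge\tact(x)}\la(\hh(x,s))\,\La sx\,q_{h,s}(x)\,ds,\qquad q_{h,s}(x)=\int_{\sE}\PX y{X_{h-s}\in B}\,\ml{\hh(x,s)}{dy}.
\end{align*}
As $h\to0$ I have $\hh(x,s)\to x$, $\La sx\to1$, and, by \CD{BB}{BB4} and continuity of the flow, $q_{h,s}(x)\to\ml xB$; hence $\frac1h\int_0^{h\wedge\tact(x)}(\cdots)\,ds\to\la(x)\ml xB$ for $\nu$-almost every $x$. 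A dominated-convergence argument, with the integrand controlled through \CD{AA}{AA1}, \CD{BB}{BB2} and \CD{BB}{BB3}, then yields $V_h/h\to\int_{A\cap\sE}\la(x)\ml xB\,\nu(dx)$.

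The forced term is where the real work lies. Here $\tact(x)\le h$, so the admissible starting points are exactly $A\cap\dis h\act$ and
\begin{align*}
F_h=\int_{A\cap\dis h\act}\La{\tact(x)}{x}\,\tilde q_h(x)\,\nu(dx),
\end{align*}
where $\tilde q_h(x)$ is the probability of lying in $B$ at time $h$ after landing according to $\ml{\hh(x,\tact(x))}{\cdot}$ at $z(x)=\hh(x,\tact(x))\in\act$. As $h\to0$ we get $\tact(x)\to0$, $\La{\tact(x)}{x}\to1$, and, using \CD{BB}{BB4} together with the no-cascade condition \CD{BB}{BB5} (which prevents an immediate re-hit of $\act$ after landing), $\tilde q_h(x)\to\ml{z(x)}{B}$. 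It then remains to prove the weighted boundary limit
\begin{align*}
\frac1h\int_{A\cap\dis h\act}\ml{z(x)}{B}\,\nu(dx)\longrightarrow\int_{A\cap\act}\ml xB\,\sigma(dx),
\end{align*}
which reduces to \eqref{rice} when the weight equals $1$.

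I would establish this weighted limit in one of two ways: either by repeating the test-function computation of Proposition \ref{sigm}, applying \eqref{zero} to a modification of $f_h$ carrying the extra factor $\ml{z(x)}{B}$ --- which is constant along each flow line and so interacts cleanly with $\xen$ --- or, more directly, by invoking the boundary-occupation formula \eqref{boundary} with $f(x)=\ind{x\in A}\ml xB$, which identifies $\int_{A\cap\act}\ml xB\,\sigma(dx)$ with the time-averaged rate of forced jumps out of $A\cap\act$ into $B$; since $x\in A\cap\dis h\act$ collapses onto $z(x)\in A\cap\act$ as $h\to0$, the conditions ``$x\in A$'' and ``$z(x)\in A$'' coincide in the limit. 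The main obstacle is precisely this boundary contribution: one must control the weight $\ml{z(x)}{B}$ uniformly as $x\to\act$ and show that replacing $X_h$ by the immediate post-jump state costs only $o(h)$, both of which rest on \CD{BB}{BB4}--\CD{BB}{BB5}, while the mild irregularity stemming from $\partial B$ is absorbed via \CD{BB}{BB4}.
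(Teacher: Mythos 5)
Your proposal is correct and follows essentially the same route as the paper: the same first-jump decomposition (one voluntary jump, one forced jump, and an $o(h)$ remainder for multiple jumps via \CD{AA}{AA3} and \CD{BB}{BB5}), dominated convergence with \CD{BB}{BB2}--\CD{BB}{BB4} for the voluntary term, and the boundary limit of Proposition \ref{sigm} for the forced term. Your weighted refinement of that boundary limit --- via the test function $f_h(x)\,\ml{z(x)}{B}$ with $x\mapsto\ml{z(x)}{B}$ constant along flow lines, or alternatively via \eqref{boundary} with $f(x)=\ind{x\in A}\ml xB$ --- is precisely the step the paper compresses into ``hence $p_3=h\int_{A\cap\act}\ml xB\,\sigma(dx)+o(h)$'', so it is a useful elaboration of the same argument rather than a different approach.
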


\begin{proof} Suppose first that $B$ is open. The probability $\por hAB$ is a sum of the probabilities of the following four events:

{\noindent 1)\;} We have $X_0\in (A\cap\sE)\setminus\dis h(A\cap\act)$, i.e. $X_0$ is not close to the boundary and a jump occurs within $h$ time units, say at time $s$. Moreover, $X_s\in \dis{h-s}B$ and no further jumps occur, so that the process will end up in $B$ at time $h$, as desired. The probability  of this event is given by
\begin{align*}
p_1=\int\limits_{(A\cap\sE)\setminus\dis h(A\cap\act)}\hspace{-3ex}
\int_0^h
\int_{\dis {h-s}{B}}\La{h-s} z\ml{\hh(x,s)}{dz}\La{ds}{x}\,\nu(dx).
\end{align*}
Since $B$ is open, we have that $\ind{z_h\in\dis {h-s}{B} }\La{h-s}{z_h}\to \ind{z\in B}$ for any sequence $\{z_h\}\in\sE$ with $z_h\to z$, $z\in\sE$ as $h\to 0$ and by condition \CD{BB}{BB4} it follows that $\ml{\hh(x,s)}{\dis {h-s}B}\to \ml{x}{B}$ as $h\to 0$. Hence, by Theorem 5.5 in \cite{bill} $\int_{\dis {h-s}{B}}\La{h-s} z\ml{\hh(x,s)}{dz}\to \ml xB$ as $h\to 0$. Since $\La{ds}{x}=\la(\hh(x,s))\La{s}{x}\,ds$ and $\La{s}{x}\to 1$ as $h\to 0$, it follows from the continuity of $\la$ (condition \CD{BB}{BB3}) that
\begin{align*}
p_1 &= h\hspace{-4ex}\int\limits_{(A\cap\sE)\setminus\dis h(A\cap\act)}\hspace{-4ex}\la(x)\ml{x}{B}
\,\nu(dx)+o(h)
= h\int\limits_{A\cap\sE}\la(x)\ml{x}{B}
\,\nu(dx)+o(h).
\end{align*}
The last equality is a consequence of the fact that $\nu(\dis h(A\cap\act))\to 0$ as $h\to 0$ since $\nu(\Gamma)=0$.

{\noindent 2)\;} The event that $X_0\in \dis h(A\cap\act)$, a unforced jump occurs at time $s$ before the process reaches $\Gamma$,  $X_s\in \dis{h-s}B$ and no further jumps occur is given by
\begin{align*}
p_2=\int_{\dis h(A\cap\act)}\int_0^{\tact(x)}\int_{\dis {h-s}{B}}\La{h-s} z\ml{\hh(x,s)}{dz}
\La{ds}{x}\,\nu(dx).
\end{align*}
Since $\tau(x)\to 0$ and  $\nu(\dis h(A\cap\act))\to 0$ as $h\to 0$, it follows by similar arguments as before that $p_2=o(h)$.

{\noindent 3)\;} $X_0\in \dis h(A\cap\act)$, the process reaches $\Gamma$ at time $\tact(x)$,  $X_s\in \dis{h-s}B$ and no further jumps occur. The probability is given by
\begin{align*}
p_3=\int_{\dis h(A\cap\act)}\int_{\dis {h-\tact(x)}{B}}\La {h-\tact(x)}z\ml{\hh(x,\tact(x))}{dz}
\La{\tact(x)}{x}\,\nu(dx).
\end{align*}
As in 1) we have that $\ind{z\in\dis {h-\tact(x)}{B}}\La {h-\tact(x)}z$ tends to $\ind{z\in B}$ for any sequence $\{z_h\}\in\sE$ with $z_h\to z$, $z\in\sE$ as $h\to 0$ and $\ml{\hh(x,\tact(x))}{\dis {h-\tact(x)}B}\to \ml{x}{B}$. Consequently
\begin{align*}
\int_{\dis {h-\tact(x)}{B}}\La {h-\tact(x)}z\ml{\hh(x,\tact(x))}{dz}
\La{\tact(x)}{x}\to \ml{x}{B}.
\end{align*}
From Proposition \ref{sigm} it follows that $\nu(\dis h(A\cap\act))=h\sigma(A\cap\act)+o(h)$. Hence
\begin{align*}
p_3=h\int_{A\cap\act} \ml{x}{B}\sigma(dx)+o(h).
\end{align*}

{\noindent 4)\;} We finally investigate the case, where more than one jump occurs during the time period $[0,h]$. The probability of this event is $o(h)$ as $h\to 0$. This follows immediately from the definition, if the process does not reach the boundary. If on the other hand $X_0\in\dis h\Gamma$ (the probability being $O(h)$) then the probability to jump to $\dis h\Gamma$ again is $o(1)$ by Condition \CD{BB}{BB5}.

Altogether we obtain
\begin{align}
\por hAB=h\int_{A\cap\sE}\la(x)\ml{x}{B}\,\nu(dx)+h\int_{A\cap\act} \ml{x}{B}\sigma(dx)+o(h).\label{por}
\end{align}
This completes the proof for the case, where $B$ is open. The general case follows using classical arguments.
\end{proof}

The total mass $\nor\xix=\xix(\sEA,\sE)=\int_{\sE} \la(u)\,\nu(du)+\sigma(\act)$ is finite due to \CD{BB}{BB2} and it
 has been shown in \cite[Proposition 5]{Costa} (and \cite[Theorem (34.21)]{Davis}) that $\nor\xix$
is strictly positive. Consequently, we can define a probability measure
\begin{align}\label{sta}
\sta(A,B)=\afrac{\xix(A,B)}{\nor\xix}
=\frac{1}{\nor\xix}\Bkla{\int_{A\cap\sE}\la(x)\ml x{B}\nu(dx)+\int_{A\cap\act}\ml x{B}\sigma(dx)}
\end{align}
on $\sEI$.

\begin{Theorem}\label{ttt}
$\sta$ defined in \eqref{sta} is a stationary measure for the Markov chain $\IMB$.
\end{Theorem}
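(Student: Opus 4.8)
The plan is to use the product structure of the one-step kernel of $\IMB_k=(\BEF_k,\AFT_k)$ together with the identity \eqref{zero}. Since $\AFT_k$ is produced from $\BEF_k$ by a jump with law $\ml{\BEF_k}{\cdot}$, and the next before-jump state $\BEF_{k+1}$ is produced from $\AFT_k$ by following the flow until the next (voluntary or forced) jump, the transition kernel factorizes as $P((w,q),dw'\,dq')=\kappa(q,dw')\,\ml{w'}{dq'}$, where $\kappa(q,\cdot)$ is the law of the before-jump position of the first jump of the process started at $q\in\sE$; crucially it does not depend on the current coordinate $w$. Writing $m(dw)=\la(w)\nu(dw)+\sigma(dw)$ (with $\nu$ carried on $\sE$ and $\sigma$ on $\act$) we have $\xix(dw,dq)=m(dw)\ml w{dq}$ and $\sta=\xix/\nor\xix$. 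Substituting the factorized kernel into $\sta P=\sta$ and integrating out the initial pair against $\sta(dw,dq)=\frac1{\nor\xix}m(dw)\ml w{dq}$, the update $\ml{w'}{dq'}$ appears on both sides and stationarity of $\sta$ reduces to the single measure identity $\int_{\sE}\widetilde m(dq)\,\kappa(q,\cdot)=m(\cdot)$ on $\sEA$, where $\widetilde m(dq)=\int_{\sEA}m(dw)\ml w{dq}$ is the $\AFT$-marginal of $\xix$. Equivalently, testing against a bounded measurable $g:\sEA\to\sR$, it suffices to prove
\[
\int_{\sE} h(q)\,\widetilde m(dq)=\int_{\sEA} g(w)\,m(dw),\qquad h(q)=\EX q{g(\BEF_1)}.
\]

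The second step is to pin down $h$ analytically. Conditioning on the first jump time of the process started at $q$ and using \eqref{Tone} with the flow property $\hh(\hh(q,s),t)=\hh(q,s+t)$ gives, for $t<\tact(q)$, the renewal relation $\La t q\,h(\hh(q,t))=h(q)-\int_0^t \la(\hh(q,s))\La s q\,g(\hh(q,s))\,ds$. Differentiating in $t$ and using $\frac{d}{dt}\La t q=-\la(\hh(q,t))\La t q$ yields the transport equation $\xen h(x)=\la(x)\bkla{h(x)-g(x)}$ for $x\in\sE$, while a point of the active boundary forces an immediate jump, so that $h(x)=g(x)$ for $x\in\act$. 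As $g$ is bounded, $h$ is bounded and (by the renewal relation and \CD{AA}{AA1}) absolutely continuous along the flow, hence $h\in\bacon$ and \eqref{zero} applies to $f=h$.

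The final step feeds $f=h$ into \eqref{zero}. Expanding $\gen h=\xen h+\la\qen h$ with $\qen h(x)=\int_{\sE}\bkla{h(y)-h(x)}\ml x{dy}$ and collecting the jump terms, \eqref{zero} rewrites as
\[
\int_{\sE}\xen h\,d\nu+\int_{\sE} h\,d\widetilde m-\int_{\sEA} h\,dm=0.
\]
Now substitute $\xen h=\la\bkla{h-g}$ on $\sE$, note that on $\sE$ the measure $m$ reduces to $\la\,\nu$, and replace $h$ by $g$ on $\act$ using the boundary identity. Every occurrence of $h$ restricted to $\sE$ then cancels between $\int_{\sE}\xen h\,d\nu$ and the $\sE$-part of $\int_{\sEA}h\,dm$, leaving exactly $\int_{\sE} h\,d\widetilde m=\int_{\sEA} g\,dm$, the reduced claim; this proves $\sta P=\sta$.

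I expect the main difficulty to be bookkeeping at the active boundary rather than any hard estimate: one must keep the two pieces $\la\nu$ (on $\sE$) and $\sigma$ (on $\act$) of $m$ separated, extend $h$ to $\act$ by its correct boundary value $g$, and check carefully that $h\in\bacon$ so that \eqref{zero} is legitimately applicable. The reduction of full stationarity on $\sEI$ to the identity $\widetilde m\kappa=m$ is clean but hinges on the factorized kernel and on the fact that the $\AFT$-update $\ml{w'}{dq'}$ is shared by both sides of $\sta P=\sta$; stating this step precisely is the other place where care is needed.
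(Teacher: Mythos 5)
Your proposal is correct and takes essentially the same route as the paper: your $h(q)=\EX q{g(\BEF_1)}$ is precisely the paper's conditional expectation $\ten f$ specialized to test functions of the form $f(w,q)=g(w)$, and your transport equation $\xen h(x)=\la(x)\bkla{h(x)-g(x)}$, the boundary identity $h=g$ on $\act$, and the substitution into \eqref{zero} reproduce the paper's computation step for step. The only difference is cosmetic: you make explicit the preliminary reduction to one-variable test functions via the factorized kernel $\kappa(q,dw')\,\ml{w'}{dq'}$ and the product form $\xix(dw,dq)=m(dw)\,\ml w{dq}$, a reduction the paper performs implicitly by working with $\ten f$, which depends only on the $\AFT$-coordinate.
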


\begin{remark}
Compare with  \cite[Theorem (34.21)]{Davis} and \cite[Theorem 1]{Costa}, where it is shown that the marginal distribution $\pi(\sE,\cdot)$ is a stationary distribution for $\AFT$. The present proof follows partly along the lines of the proofs presented there.
\end{remark}

\begin{proof}[Proof of Theorem \ref{ttt}]  Let $\ten f(y)=\EE{f(\BEF_k,\AFT_k)|\AFT_{k-1}=y}\in\bacon$ for some function
$f\in \bacon$.
Conditioning with respect to the first jump time of the process we obtain:
\begin{eqnarray*}
\ten f(y)
&=&\int_0^{\tact(y)}\int_{\sE}f(\hh(y,s),u)\,\ml{\hh(y,s)}{du} \la(\hh(y,s))\La{s}{y}\,ds
\nline+\int_{\sE}f(\hh(y,\tact(y)),u)\,\ml{\hh(y,\tact(y))}{du} \La{\tact(y)}{y}.
\end{eqnarray*}
It follows from the flow property $\hh(\hh(x,t),s)=\hh(x,s+t)$, that
\begin{eqnarray*}
\ten f(\hh(y,t))
&=&\int_t^{\tact(y)}\int_{\sE}f(\hh(y,s),u)\,\ml{\hh(y,s)}{du} \la(\hh(y,s))\frac{\La{s}{y}}{\La{t}{y}}\,ds
\nline+\int_{\sE}f(\hh(y,\tact(y)),u)\,\ml{\hh(y,\tact(y))}{du} \frac{\La{\tact(y)}{y}}{\La{t}{y}}
\\&=&\frac{1}{\La{t}{y}}\Bkla{\ten f(y)-\int_0^{t}\int_{\sE}f(\hh(y,s),u)\,\ml{\hh(y,s)}{du} \la(\hh(y,s))\La{s}{y}\,ds}.
\end{eqnarray*}
Since $\xen  \ten f(y)=\lim_{t\to 0}\frac{d}{dt}\ten f(\hh(y,t))$, we obtain
$
\xen  \ten f(y)=\la(y)\kla{\ten f(y)-\int_{\sE}f(y,u)\,\ml{y}{du}}.
$
It follows that the generator applies to $\ten f$ in the following way,
\begin{eqnarray}
\gen \ten f(y)&=&\la(y)\Bkla{\ten f(y)-\int_{\sE}f(y,u)\,\ml{y}{du}+\qen \ten f(y)}
\nonumber\\&=&\la(y)\Bkla{\ten f(y)-\int_{\sE}f(y,u)\,\ml{y}{du}+\int_{\sE} \kla{\ten f(u)-\ten f(y)}\,\ml y{du}}
\nonumber\\&=&\la(y)\int_{\sE}(\ten f(u)-f(y,u))\,\ml{y}{du}.\label{uu}
\end{eqnarray}
Next we show that $\xix$ is an invariant measure for  $\IMB$, i.e. $\xix{\ten f}=\xix f$, where we use the usual abbreviation
$\xix f=\int_{\sEI} f(x,y)\xix(dx,dy)$ and $\xix{\ten f}=\int_{\sEI} \ten f(y)\xix(dx,dy)$. It follows from \eqref{zero}  and \eqref{uu} that
\begin{eqnarray*}
\int_{\sE} \la(y)\int_{\sE}(\ten f(u)-f(y,u))\,\ml{y}{du}\,\nu(dy)+\int_{\act} \int_{\sE}(\ten f(u)-\ten f(y))\,\ml y{du}\,\sigma(dy)=0.
\end{eqnarray*}
Hence, using the identity $\ten f(y)=\int_{\sE}f(y,u)\,\ml{y}{du} $ for $y\in\act$, we have:
\begin{eqnarray*}
&&\int_{\sE} \la(y)\int_{\sE}\ten f(u)\,\ml{y}{du}\,\nu(dy)+\int_{\act} \int_{\sE}\ten f(u)\,\ml y{du}\,\sigma(dy)
\nline=\int_{\sE} \la(y)\int_{\sE} f(y,u)\,\ml{y}{du}\,\nu(dy)+\int_{\act}\int_{\sE}f(y,u)\,\ml{y}{du}\,\sigma(dy)
\end{eqnarray*}
proving that $\xix{\ten f}=\xix f$.
\end{proof}

It follows immediately that a stationary distributions of $\BEF_k$ (the state of the process just before the jump) is given by
\begin{eqnarray}
\stbef(A)&=&\frac{1}{\nor{\xix}}\Bkla{\int_{A\cap\sE}\la(x)\nu(dx)+\sigma(A\cap\act)},\label{piw}
\end{eqnarray}
for $A\in\borel{\sEA}$. The restricted measure $\res{\stbef}{\sE}$ is absolutely continuous with respect to $\nu$ with Radon-Nikodym derivative $\la$, whereas on the active boundary $\res{\stbef}{\act}$ is a constant  multiple of the measure $\sigma$. Note that for an empty active boundary and a constant jump rate the above relation shows that the PASTA  property  (Poisson Arrivals See Time Average) $\stbef(A)=\nu(A)$ holds.

By Theorem \ref{ttt} a stationary distribution for the observations $\AFT_k$ of the process right  after the jumps is given by
\begin{eqnarray}
\staft(B)&=&\int_{\sEA} \ml x{B}\stbef(dx)\label{piq}
\\&=&\frac{1}{\nor{\xix}}\Bkla{\int_{\sE}\la(x)\ml x{B}\nu(dx)+\int_{\act}\ml x{B}\sigma(dx)},\nonumber
\end{eqnarray}
for $B\in\borel\sE$, confirming the results in \cite{Davis, Costa}. Note that  the formula for $\staft$ follows from \eqref{piw} after conditioning on the jump size.

\subsection{Absolute continuity of the stationary measure}

It is interesting to ask under which conditions $\nu$ is an absolutely continuous measure (w.r.t. Lebesgue measure).
This property jointly with absolutely continuity of the jump measure or stationary measure $\pi_Q$
simplifies the  identification of the parameters of the reversed process.
For one-dimensional models on the real line $\nu$ was found to be absolutely continuous under very mild conditions, see \cite{Zheng, BV, borovlast, lastergo,lastrice, Costa}. This is in accordance with countless observations of absolutely continuous stationary distributions for PDMP type stochastic models in the literature (e.g. \cite{Asmussen, Bea, Brockwell, HarRes, rene, barpar, AL-ISO}). However, in general, i.e. for higher dimensions, it seems very hard to give necessary and sufficient criteria.

We introduce one more condition on the process $X_t$.

\begin{Condition}\label{COC} $\EX{\staft}{T_1}<\infty$.
\end{Condition}

\begin{remark} 
The condition is satisfied in the vast majority of cases. The reason to introduce it is the following useful converse to \eqref{piq}. If  $\EX{\staft}{T_1}<\infty$ then the stationary measure $\nu$ can be reconstructed from $\staft$ by an argument from regeneration theory. Note that under $\staft$ the time until the first jumps occurs forms a cycle, hence, by \cite[Corollary VII 1.4]{Asmussen},
\begin{align}
\EX{\nu}{f(X_0)}=\frac{\EX{\staft}{\int_0^{T_1}f(X_s)\,ds}}{\EX{\staft}{T_1}};\label{regener}
\end{align}
(c.f. with  \cite[equation (12)]{Costa}) for bounded continuous functions $f$.
\end{remark}

Recall that the state space $\sE$ consists of a disjoint union of components $\sE_i$: $\sE=\{(x,i):i\in K,x\in\sE_i\}$.
 The following proposition gives a criterion for absolute continuity of $\nu$ on one of these $\sE_i$, $i\in K$.

\begin{Proposition}\label{prop} Suppose that, additionally to \cd{COC}, $r(x)\not=0$ for all $x\in\sE_i$. If $d(i)=1$ then $\nu$ is absolutely continuous on $\sE_i$. In general, $\nu$ is absolutely continuous on $\sE_i$ if $\staft$ is absolutely continuous there and $r$ is continuously differentiable.
\end{Proposition}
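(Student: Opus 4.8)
The plan is to feed the regeneration identity \eqref{regener} into the line-integral rule \eqref{u2} and then read off a Lebesgue density along the flow. First I would unpack the numerator of \eqref{regener}. Under $\staft$ the process starts immediately after a jump and moves deterministically until the first jump, so $X_s=\hh(X_0,s)$ for $s<T_1$; since the flow preserves the component, only starting points in $\sE_i$ contribute when $f$ is supported in $\sE_i$. Combining this with the law of $T_1$ from \eqref{Tone} (so that $\PX x{T_1>s}=\La{s}{x}$ on $s<\tact(x)$) and Tonelli gives, for bounded continuous $f$ supported in $\sE_i$,
\begin{align*}
\EX{\staft}{\int_0^{T_1}f(X_s)\,ds}
=\int_{\sE_i}\int_0^{\tact(x)} f(\hh(x,s))\,\La{s}{x}\,ds\,\staft(dx).
\end{align*}
Writing $c=\EX{\staft}{T_1}<\infty$ (finite by \cd{COC}), the restriction $\res{\nu}{\sE_i}$ is, up to the factor $1/c$, the image of the measure $m(dx,ds)=\La{s}{x}\,ds\,\staft(dx)$ on $\{(x,s):x\in\sE_i,\ 0\le s<\tact(x)\}$ under the flow map $\Phi_i(x,s)=\hh(x,s)$.

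For $d(i)=1$ this already settles the claim without any regularity of $\staft$. Since $r(x)\neq0$ on $\sE_i$, for each fixed $x$ the map $s\mapsto\hh(x,s)$ is a homeomorphism of $[0,\tact(x))$ onto the interval $\cvv x$, and the substitution \eqref{u2} turns the inner integral into
\begin{align*}
\int_0^{\tact(x)} f(\hh(x,s))\,\La{s}{x}\,ds=\int_{\cvv x}f(u)\,\frac{\La{\tim{x}{u}}{x}}{|r(u)|}\,du .
\end{align*}
Thus for every $x$ the fibre measure is absolutely continuous with density $u\mapsto\ind{u\in\cvv x}\La{\tim{x}{u}}{x}/|r(u)|$, and mixing these absolutely continuous fibre measures against the arbitrary finite measure $\staft(dx)$ preserves absolute continuity (apply Tonelli to the joint density). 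This exhibits a Lebesgue density for $\res{\nu}{\sE_i}$.

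For general $d(i)$ a single fibre lives on a curve of $\leb_{d(i)}$-measure zero, so the extra spreading must come from $\staft$; this is why I would now invoke the hypotheses that $\staft$ is absolutely continuous and $r\in C^1$. With $r\in C^1$ the flow, hence $\Phi_i$, is $C^1$, and its Jacobian is the $d(i)\times(d(i)+1)$ block matrix $[\,\partial_x\hh(x,s)\mid r(\hh(x,s))\,]$. Here $\partial_x\hh(x,s)$ solves the variational equation with the identity as initial value, so it is invertible for every $s$; consequently $D\Phi_i$ has full rank $d(i)$ and $\Phi_i$ is a submersion. When $\staft\abscon\leb_{d(i)}$ the measure $m$ is absolutely continuous with respect to $\leb_{d(i)+1}$ on the domain, and the pushforward of an absolutely continuous measure under a $C^1$ submersion is again absolutely continuous: one straightens $\Phi_i$ locally to a coordinate projection $\sR^{d(i)+1}\to\sR^{d(i)}$ by the implicit function theorem, integrates out the one fibre coordinate, and patches with a countable cover. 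Therefore $\res{\nu}{\sE_i}\abscon\leb_{d(i)}$.

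The routine parts are the Tonelli interchanges and the finiteness bookkeeping, which are controlled by $c<\infty$ and $\nor{\xix}<\infty$. The real work is in the higher-dimensional step: proving that the flow map is a genuine submersion (via invertibility of the variational matrix $\partial_x\hh$) and that pushing an absolutely continuous measure forward under a submersion cannot create a singular part. I expect this submersion--pushforward argument, rather than the one-dimensional case, to be the main obstacle.
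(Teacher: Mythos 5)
Your argument is correct, and in the decisive higher-dimensional step it takes a genuinely different route from the paper. Both proofs launch from the regeneration identity \eqref{regener}, and in the case $d(i)=1$ the two arguments are essentially the same mechanism (the substitution \eqref{u2} along a curve with $|r|>0$): the paper phrases it negatively — a Lebesgue null set $A$ satisfies $\leb_1(A\cap \cvv x)=0$, so the occupation time vanishes and $\nu(A)=0$ — whereas you produce the explicit fibre density $\La{\tim{x}{u}}{x}/|r(u)|$ and mix over $\staft$, which buys you a formula for the density, not just its existence. For general $d(i)$ the paper argues by contradiction: it bounds the occupation expectation by $\int_{C_A}\EX x{T_1}\,\staft(dx)$ as in \eqref{894}, where $C_A$ is the set of starting points whose flow spends positive time in $A$, and shows $\leb_{d(i)}(C_A)=0$ via \eqref{mox} because the backward flow $\zeta_s:x\mapsto\hh(x,-s)$ is $C^1$, hence satisfies Lusin's condition (N) and maps the null set $A$ to a null set; absolute continuity of $\staft$ then kills the right side of \eqref{894}. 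You instead represent $\res{\nu}{\sE_i}$ directly as $c^{-1}$ times the pushforward of $m(dx,ds)=\La{s}{x}\,ds\,\staft(dx)$ under $\Phi_i(x,s)=\hh(x,s)$, verify via the variational equation that $\partial_x\hh(x,s)$ is invertible so that $\Phi_i$ is a $C^1$ submersion, and invoke the (true, standard) fact that a $C^1$ submersion pushes absolutely continuous measures to absolutely continuous measures by local straightening and a countable cover; the finiteness $m(\cdot)=\EX{\staft}{T_1}<\infty$ is exactly \cd{COC}. Your route is heavier machinery (rank theorem plus straightening versus a one-line appeal to Lusin (N)), but it is more constructive — it disintegrates $\nu$ and in principle yields its density — and it makes visible that $r(x)\neq 0$ is not actually needed in the higher-dimensional case, since full rank of $D\Phi_i$ comes from the block $\partial_x\hh$ alone. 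One trade-off: the paper's Lusin-(N) argument relaxes beyond $C^1$ to the condition \eqref{moose} (as its subsequent remark notes), while your straightening step genuinely uses the $C^1$ hypothesis.
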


\begin{proof}
It follows from \eqref{regener} that
\begin{align}
\nu(A)\leq c\EX{\staft}{\int_0^{T_1}\ind{X_s\in A}\,ds}\label{from}
\end{align}
with some constant $c>0$. If $d(i)=1$ and $r(x)\not=0$, then $\leb_1(A\cap \cvv x)=0$ for all Lebesgue null sets $A\in\borel{\sR}$ and in particular $\EX{x}{\int_0^{T_1}\ind{X_s\in A}\,ds}=0$. It follows from \eqref{from} that $\nu(A)=0$, showing that $\nu$ is absolutely continuous. In the general case, i.e. if $d(i)\geq 1$, suppose that $\staft$ is absolutely continuous. We have
\begin{align}
\EX{\staft}{\int_0^{T_1}\ind{X_s\in A}\,ds}
\leq\int_{C_A}\EX x{T_1} \;\staft(dx),\label{894}
\end{align}
where $C_A=\{x\in\sE_i:\int_0^{\tact(x)} \ind{\hh(x,s)\in A}\,ds\not=0\}$.  If we can show that $C_A$  is a Lebesgue null set, then it follows that $\staft(C_A)=0$, since $\staft$ is absolutely continuous. But then the right hand side of \eqref{894} is zero and absolutely continuity of $\nu$ follows from \eqref{regener}. To show that $\leb_{d(i)}(C_A)=0$, suppose the converse. Then
\begin{align}
0<\int_{\sE}\int_0^{\infty} \ind{s<\tact(x),\hh(x,s)\in A}\,ds\,dx
=\int_0^{\infty} \int_{\sE_i}\ind{s<\tact(x),\hh(x,s)\in A}\,dx\,ds.\label{mox}
\end{align}
If $r$ is continuously differentiable then $\zeta_t:x\mapsto \hh(x,-t)$ is also continuously differentiable (e.g. \cite[p.95]{hartman}). It follows that $\zeta_t$  maps Lebesgue null sets to Lebesgue null sets (see \cite[Proposition 26.3]{Yeh}) and that the integral of $\int_{\sE_i}\ind{s<\tact(x),\hh(x,s)\in A}\,dx$ over the indicator function of the image $\zeta_s(A)$ of $A$, must be zero, contradicting \eqref{mox}. Hence $\leb_{d(i)}(C_A)=0$ and  $\nu$ is absolutely continuous.
\end{proof}

\begin{remark}
The condition on $r$ can be relaxed. In fact it is enough to require that $\zeta_t$ fulfils the weaker condition
\begin{align}
\sup_{x\in\sE} \lim_{x\to y}\frac{|\zeta_t(x)-\zeta_t(y)|}{|x-y|}<\infty.
\label{moose}
\end{align}
In that case $\zeta_t$ fulfils what is called Lusin's condition (N), that is $\zeta_t$ maps null sets to null sets (\cite[Proposition 26.2]{Yeh}).
\end{remark}

\section{The reversed process}\label{reverse}
\subsection{Definition}
We now assume $X_t$ to be stationary, that is, the process starts with  initial distribution $\nu$ and then has the same distribution for all $t\geq 0$. We pick a fixed time $T>0$ and define the reversed process for $t\in[0,T]$  by $\ad X_t=X_{(T-t)-}$ (we indicate variables belonging to the reversed process with a star). Then $\ad X_t$ is a right-continuous stationary stochastic process with state space  $\sEA$ and initial distribution $\nu$. Obviously the active boundary of the reversed process is given by $\acm$ and conversely the passive boundary is now $\act$. It is known (see \cite{Naga} and \cite[Theorem 2.1.1]{Nag2}) that $\ad X_t$, constructed in this way, is again a time homogeneous Markov process. Obviously  $\ad X_t$ inherits from $X_t$ the property to have piecewise deterministic paths with random jumps and no explosions. So $\ad X_t$ would in fact be a PDMP if we could  show that $\ad X_t$ has a regular jump intensity $\adla$, in the sense that the conditions \CD{AA}{AA1} and \CD{AA}{AA2} for $\la$ have to be fulfilled also for $\adla$ (this is not self-evident, see the example below). Therefore, we have to impose two more conditions on the process $X_t$, namely $\staft$ has to be absolutely continuous on $\sEE$ (allowing a mass on $\acm=\sE\setminus\sEE$) with a locally integrable Radon-Nikodym derivative, which in Proposition \ref{wrc} below will  be seen to be our jump intensity function for the reversed process. 

\begin{Condition}\label{DD} From now on we assume:
\begin{enumerate}
\itc{DD}1\label{DD1}
$\res{\staft}{\sEE}\abscon\nu$ with Radon-Nikodym derivative $\beta(x)=\radons{\res{\staft}{\sEE}}{\nu}(x)$, $x\in\sEE$.
\itc{DD}2\label{DD2} For all $x\in\sE$ there is an $\veps(x)>0$, such that $\int_0^{\veps(x)} \beta(\hh(x,-s))\,ds<\infty$.
\end{enumerate}
\end{Condition}

\begin{Proposition}\label{wrc}
Under \cd{DD} the reversed process is a PDMP, fulfilling the conditions \CD{AA}{AA1}-\CD{AA}{AA4}, with intensity function given by
$\adla(x)=\nor{\xix}\beta(x)$.
\end{Proposition}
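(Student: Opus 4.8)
The plan is to show that the reversed process $\ad X$, which by construction runs deterministically along the backward flow $\hh(\cdot,-t)$ with the two boundaries interchanged (so that $\acm$ becomes active and $\act$ passive) and which is already known to be a time-homogeneous Markov process with piecewise deterministic paths and no explosions, in fact carries a regular jump intensity, and to identify that intensity as $\nor\xix\beta$. First I would describe the embedded chains of $\ad X$. Since $\ad X_t=X_{(T-t)-}$ and $X$ is \cadlag, a jump of $X$ at a time $T_k$ becomes a jump of $\ad X$ at $T-T_k$ with $\ad X_{(T-T_k)-}=X_{T_k}=\AFT_k$ and $\ad X_{T-T_k}=X_{T_k-}=\BEF_k$. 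Hence the pre-jump chain of the reversed process is distributed as $\AFT$ and its post-jump chain as $\BEF$; in the stationary regime this gives $\ad\stbef=\staft$ and $\ad\staft=\stbef$. Because the jump times of $\ad X$ are merely the reflections through $T$ of those of $X$, the stationary jump rate is preserved, $\nor\admul=\nor\xix$.

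Next I would read off the intensity from the pre-jump distribution. For the forward process, \eqref{piw} shows that on the interior $\res{\stbef}{\sE}(dx)=\nor\xix^{-1}\la(x)\nu(dx)$, i.e.\ the intensity is $\nor\xix$ times the Radon--Nikodym derivative of the interior pre-jump measure with respect to $\nu$. Reading the same identity for $\ad X$ gives $\res{\ad\stbef}{\sEE}(dx)=\nor\admul^{-1}\adla(x)\nu(dx)$; substituting $\ad\stbef=\staft$, the hypothesis \CD{DD}{DD1} that $\res{\staft}{\sEE}=\beta\,\nu$, and $\nor\admul=\nor\xix$, I obtain $\adla(x)=\nor\xix\beta(x)$ for $\nu$-almost every $x\in\sEE$.

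It then remains to verify \CD{AA}{AA1}--\CD{AA}{AA4} for $\adla$ together with the backward dynamics. Condition \CD{AA}{AA1} is immediate, since $\int_0^{\veps(x)}\adla(\hh(x,-s))\,ds=\nor\xix\int_0^{\veps(x)}\beta(\hh(x,-s))\,ds<\infty$ is exactly \CD{DD}{DD2}. Condition \CD{AA}{AA3} follows because $\ad N_t$ counts jumps of $X$ over a reflected interval and is thus finite by \CD{AA}{AA3} for $X$, while \CD{AA}{AA4} holds because a reversed jump carries $\AFT_k$ to $\BEF_k$ and could have size zero only if $X$ performed a zero jump, excluded by \CD{AA}{AA4}.

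I expect two steps to be delicate. First, the reading-off of $\adla$ above is a priori circular, because \eqref{piw} was derived for a process already known to be a PDMP with a regular intensity; I would instead take $\adla:=\nor\xix\beta$ as a candidate, construct the PDMP with backward flow and this intensity, check that its interior pre-jump stationary law equals $\beta\,\nu=\res{\staft}{\sEE}$, and invoke uniqueness of the time reversal (\cite{Naga,Nag2}) to identify the constructed process with $\ad X$. Second, \CD{AA}{AA2}---that $\ad X$ jumps in finite time whenever the backward flow never meets $\acm$---does not come for free; the plan is to argue via stationarity and the regeneration identity \eqref{regener} that a positive-$\nu$-measure set of points with $\int_0^\infty\adla(\hh(x,-s))\,ds<\infty$ would let $\ad X$ avoid jumping with positive probability, contradicting \CD{AA}{AA2} for $X$.
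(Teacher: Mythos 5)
Your Plan A (reading \eqref{piw} for the reversed process) is, as you yourself note, circular, and your Plan B does not repair it. First, ``construct the PDMP with backward flow and this intensity'' is underspecified: a PDMP also requires a jump measure, and the reversed jump measure $\adml y{dx}$ is only identified later in the paper (Theorem \ref{main}, equation \eqref{ecce}); nothing in your argument supplies it at this stage. Second, even granting some construction, your matching criterion is too weak: the interior pre-jump stationary law constrains only the measure $\adla(x)\nu(dx)$ on $\sEE$ and says nothing about where the jumps land, so two candidate processes with the same backward flow, the same intensity and the same stationary law $\nu$ but different jump measures would both pass your test while having different laws. The uniqueness results of Nagasawa that you invoke assert that $X_{(T-t)-}$ is a time-homogeneous Markov process whose semigroup is the $\nu$-dual of that of $X$; to use them as an identification device you would have to verify that duality for your constructed semigroup, which is a statement about transition probabilities, strictly stronger than $\ad\stbef=\staft$, and checking it essentially amounts to redoing the direct computation you were trying to avoid.

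That direct computation is exactly the paper's proof, and it is shorter than either of your plans. Since $\ad X_t$ is already known to be a time-homogeneous Markov process with piecewise deterministic paths and no explosions, the only open point is the existence of a regular hazard rate, and the paper extracts it from the pathwise reflection identity: for $B\subseteq\sEE$, the event that $\ad X$ sits in $B$ and has a voluntary jump within $h$ time units is precisely the forward event measured by $\por h{\sEA}B$, whose expansion \eqref{por} was already proved, giving $\por h{\sEA}B = h\nor\xix\staft(B)+o(h)=h\int_B\nor\xix\beta(x)\,\nu(dx)+o(h)$; dividing by $h$ and letting $h\to 0$ identifies $\adla=\nor\xix\beta$. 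Your own step 1 sets up exactly this reflection of jump epochs and pre/post states, but you never exploit it quantitatively, which is the one missing move. The remaining verifications in your proposal are fine and agree with the paper: \CD{DD}{DD2} gives \CD{AA}{AA1} verbatim, and your stationarity argument for \CD{AA}{AA2} is in fact more detailed than the paper's bare assertion that $\PP{\ad T_1<\infty}=1$ (with the shared caveat that all such arguments pin down $\adla$ only $\nu$-almost everywhere).
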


\begin{proof} According to \eqref{por} the probability $\adpor hB{\sEA}$ that $\ad X_t\in B\subseteq\sEE$ and that the process has a voluntary jump during $[t,t+h]$ is given by
\begin{align*}
\adpor hB{\sEA}&=h\cdot \int_{B}\adla(x)\nu(dx)+o(h).
\end{align*}
On the other hand, by construction of $\ad X_t$, $\adpor hB{\sEA}$ is equal to the probability $\por h{\sEA}B$ and hence equal to
\begin{align}
\por h{\sEA}B&=h\cdot \Bkla{\int_{\sE}\la(x)\ml xB\nu(dx)+\int_{\act} \ml xB \sigma(dx)+o(1)}\label{o1}
\\&=h\nor{\xix}\staft(B)+o(h)=h\int_B \nor{\xix}\beta(x)\,\nu(dx)+o(h),\nonumber
\end{align}
yielding $\adla(x)=\nor{\xix}\beta(x)$. \cd{DD} ensures that $\adla$ fulfills \CD{AA}{AA1}.  Moreover the condition \CD{AA}{AA2}, i.e. $\PP{\ad T_1<\infty}=1$, certainly holds. Indeed $\adla(x)$ is a proper intensity function for the PDMP $\ad X_t$.
\end{proof}

\begin{remark}
Since
\begin{eqnarray*}
\int_{\sEA}\adla(x)\nu(dx)=\nor{\xix}\int_{\sEA}\beta(x)\nu(dx)=\nor{\xix}\staft(\sEA)<\infty,
\end{eqnarray*}
it follows that \CD{BB}{BB2} always holds.
\end{remark}

\begin{remark}
\cd{DD} is not always fulfilled, as the following simple counterexample shows. Consider $X_t$ on $[0,1]$ with $\xen  f(x)=f'(x)$ (linear growth with rate one), $\act=\{1\}$, $\la(x)=0$ for $x\in[0,1)$, $\mu_{1}(A)=\frac{1}{2}(\ind{1/2\in A}+\ind{0\in A})$, so that jumps can go from $\act$ to $0$ and $1/2$ only, each with probability $1/2$. It  is not difficult to show that
$$\nu([0,x])=\frac{\ind{x<1/2}2x+\ind{x\geq 1/2}(4x-1)}{3},$$ hence the stationary measure is absolutely continuous, but $\staft$ is certainly not. What happens is that the reversed process $\ad X$ is not a proper PDMP since with a positive probability the process is forced to jump  when reaching $1/2$.
\end{remark}

\subsection{The parameters of the reversed process}

The following theorem shows that the stationary distribution of the reversed Markov chain $\ad\IMB_k=(\ad\BEF_k,\ad\AFT_k)$ is given by the measure $\sta$, but with reversed arguments.

\begin{Theorem}\label{main} For all $A\in\borel{\sEA},B\in\borel{\sE}$ we have $\sta(A,B)=\ad\sta(B,A)$.
\end{Theorem}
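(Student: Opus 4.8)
The plan is to reduce the theorem to the single identity $\admul(B,A)=\xix(A,B)$, where $\xix$ is the jump-rate measure of \eqref{sh} and $\admul$ is its exact analogue for the reversed process. Since $\ad X_t$ is a genuine PDMP by Proposition \ref{wrc}, the construction leading to \eqref{sta} and Theorem \ref{ttt} apply to it as well, so that the stationary distribution of the reversed embedded chain $\ad\IMB$ is $\ad\sta(B,A)=\admul(B,A)/\nor\admul$ with
\[
\admul(B,A)=\lim_{h\to 0}\frac{\adpor hBA}{h},\qquad \adpor hBA=\PX\nu{\ad X_0\in B,\ \ad X_h\in A,\ \ad T_1\in[0,h]}.
\]
Because $\sta(A,B)=\xix(A,B)/\nor\xix$ by \eqref{sta}, once the identity is established for all $A\in\borel\sEA$ and $B\in\borel\sE$, the choice $A=\sEA$, $B=\sE$ gives $\nor\admul=\admul(\sE,\sEA)=\xix(\sEA,\sE)=\nor\xix$, and dividing yields $\ad\sta(B,A)=\sta(A,B)$.

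The heart of the matter is a pathwise reversal identity for individual jumps. Fix a jump of the forward process at a time $s\in(0,T)$ and write $a=X_{s-}$ and $b=X_s$ for its pre- and post-jump states. Since the jump times are isolated and $X$ is right-continuous, evaluating the one-sided limits of $\ad X_t=X_{(T-t)-}$ at $t=T-s$ gives $\ad X_{(T-s)-}=b$ and $\ad X_{T-s}=a$. Thus the reversed process jumps at time $T-s$, its pre-jump state is the forward post-jump state $b$, and its post-jump state is the forward pre-jump state $a$; along the reversed jump sequence $(\ad\BEF,\ad\AFT)=(\AFT,\BEF)$. In particular, forced jumps of $X$ leaving $\act$ become jumps of $\ad X$ arriving at $\act$ (now the passive boundary), while jumps of $X$ arriving at $\acm$ become forced jumps of $\ad X$ leaving its active boundary $\acm$.

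With this identity I would match the two infinitesimal probabilities. Writing $\ad X_0=X_{T-}$ and $\ad X_h=X_{(T-h)-}$, the $O(h)$ part of $\adpor hBA$ comes from configurations with exactly one forward jump in the interval $(T-h,T)$; by the reversal identity (first for open $A$ and $B$, exactly as in the proof of \eqref{sh}) this single jump has post-jump state in $B$ and pre-jump state in $A$ as $h\to 0$. Stationarity of $X_t$ then translates the window $(T-h,T)$ back to $(0,h)$, so that $\adpor hBA=\por hAB+o(h)$ and hence $\admul(B,A)=\xix(A,B)$. The passage from open to general Borel $A,B$ is by the same classical extension used for \eqref{sh}.

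The main obstacle is not the idea but the bookkeeping that makes the one-jump reduction rigorous: one must verify that two or more jumps inside $(T-h,T)$ contribute only $o(h)$ (as in part~4 of the proof of \eqref{sh}, using \CD{BB}{BB5}), treat separately the forced-jump configurations that generate the boundary term $\int_{A\cap\act}\ml xB\,\sigma(dx)$ and its reversed analogue, and invoke the null-mass facts $\nu(\act)=\nu(\acm)=0$ when passing to the limit. A second point needing care is that running Theorem \ref{ttt} for $\ad X$ presupposes that the embedded-chain machinery is available for the reversed process; this is exactly what \cd{DD} and Proposition \ref{wrc} provide, since they guarantee that $\adla=\nor\xix\,\beta$ is a bona fide intensity satisfying \CD{AA}{AA1}--\CD{AA}{AA2}, so that $\admul$ and $\ad\sta$ are well defined in the first place.
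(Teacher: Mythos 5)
Your proposal is correct and takes essentially the same route as the paper's proof: both apply the infinitesimal jump-probability formula \eqref{sh} to the reversed process (legitimate by Proposition \ref{wrc} and \cd{DD}), identify $\adpor hBA$ with $\por hAB$ by time reversal and stationarity of $X_t$, divide by $h$, and recover $\nor{\ad\xix}=\nor{\xix}$ by taking $A=\sEA$, $B=\sE$. Your pathwise jump-reversal bookkeeping and the $o(h)$ control of multi-jump configurations merely make explicit what the paper compresses into ``by the definition of the reversed process, $\adpor hBA=\por hAB$''.
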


\begin{proof} The probability $\adpor hBA$ that $\ad X\in B$ and that the process jumps during $[t,t+h]$ into the set $A$ is equal to (c.f.  \eqref{o1}):
\begin{eqnarray*}
\adpor hBA&=&h\nor{\ad\xix}\adsta(B,A)+o(h).
\end{eqnarray*}

At the same time, by the definition of the reversed process, $\adpor hBA=\por hAB$ so
$\nor{\ad\xix}\adsta(B,A)=\nor{\xix}\sta(A,B)+o(1)$, implying first $\nor{\ad\xix}=\nor{\xix}$ and then 
$\adsta(B,A)=\sta(A,B)$.
\end{proof}

We can also write this as
\begin{eqnarray}
\adml y{dx}(\ad\la(y)\nu(dy)+\ad\sigma(dy))&=
\ml x{dy}(\la(x)\nu(dx)+\sigma(dx)).\label{ecce}
\end{eqnarray}
Integrating over $y\in A\subseteq\sE$, $x\in B\subseteq\ad\sE$ we obtain
\begin{align*}
\int_{y\in A}\adml y{B}(\ad\la(y)\nu(dy)+\ad\sigma(dy))&=
\int_{x\in B}\ml x{A}(\la(x)\nu(dx)+\sigma(dx)).
\end{align*}
The terms in the parentheses in \eqref{ecce} coincide with the stationary distributions of $\BEF$ and $\ad{\BEF}$ respectively. Hence we obtain the shorter representation
\begin{eqnarray}
\ml x{dy}\stbef(dx)
=
\adml y{dx}\ad\stbef(dy).\label{ecce2}
\end{eqnarray}
In particular the stationary distributions of the imbedded Markov chains correspond to each other: $\staft=\ad{\stbef}$ and $\stbef=\ad{\staft}$.

Frequently the jump measure $\mls x$ is absolutely continuous with respect to some other measure $\mu$ for every $x\in\sEA$. If this is the case, the same is true for the jump measure of the reversed process as the following corollary of Theorem \ref{main} shows.
\begin{Proposition}\label{absolu}
Suppose that $\mls x\abscon\mu$ with some measure $\mu$ on $\sE$ for every $x\in\sEA$. Then $\staft\abscon\mu$ and
\begin{align}
\radon{\staft}{\mu}(y)\adml y{dx}=\radon{\mls x}{\mu}(y)\stbef(dx).\label{eh}
\end{align}
The jump intensity of the reversed process fulfils
\begin{eqnarray}
\label{ka1}
\ad\la(y)\nu(dy)=\nor{\xix}\radon{\staft}{\mu}(y)\;\res{\mu}{\sEA}(dy).
\end{eqnarray}
For the boundary measure we have $\ad\sigma\abscon\res{\mu}{\acm}$ and
\begin{eqnarray}
\label{ka2}
\ad\sigma(dy)=\nor{\xix}\radon{\staft}{\mu}(y)\;\res{\mu}{\acm}(dy).
\end{eqnarray}
\end{Proposition}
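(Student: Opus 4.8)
The plan is to treat the three claims as successive consequences of the master relation $\ml x{dy}\stbef(dx)=\adml y{dx}\ad\stbef(dy)$ from \eqref{ecce2}, together with the explicit forms of $\stbef$ in \eqref{piw} and of $\ad\stbef=\staft$ recorded just after \eqref{ecce2}. First I would establish $\staft\abscon\mu$. By \eqref{piq} we have $\staft(B)=\int_{\sEA}\ml x{B}\stbef(dx)$, so if $\mu(B)=0$ then $\ml x{B}=0$ for every $x\in\sEA$ by the hypothesis $\mls x\abscon\mu$, and hence $\staft(B)=0$; this is the absolute continuity, and it guarantees the Radon--Nikodym derivative $\radons{\staft}{\mu}$ exists.

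Next I would derive \eqref{eh} by rewriting both sides of \eqref{ecce2} as measures on the product space $\sEA\times\sE$ and disintegrating with respect to $\mu(dy)$. Concretely, \eqref{ecce2} says that the two joint measures $\ml x{dy}\,\stbef(dx)$ and $\adml y{dx}\,\ad\stbef(dy)$ on $\sEA\times\sE$ agree. Since $\ad\stbef=\staft$ and $\staft\abscon\mu$, I would write $\ad\stbef(dy)=\radons{\staft}{\mu}(y)\,\mu(dy)$, and since $\mls x\abscon\mu$ I would write $\ml x{dy}=\radons{\mls x}{\mu}(y)\,\mu(dy)$. Substituting both into \eqref{ecce2} and cancelling the common factor $\mu(dy)$ yields exactly \eqref{eh}. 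The one point to be careful about here is that the cancellation of $\mu(dy)$ is legitimate: this is a disintegration/uniqueness-of-density argument, so I would phrase it as an equality of the $\mu(dy)$-densities of the two product measures, holding for $\mu$-almost every $y$, rather than a naive division of differentials.

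Finally I would obtain \eqref{ka1} and \eqref{ka2} by integrating \eqref{eh} over $x$ and matching against \eqref{ecce}. Integrating the right-hand side of \eqref{eh} over all $x\in\sEA$ gives $\radons{\staft}{\mu}(y)\,\staft(dy)$-type mass; more precisely, integrating \eqref{eh} in $x$ against $\stbef$ reproduces $\staft(dy)$ on the left, and I would instead use the left-hand side of \eqref{ecce}, namely $\adml y{dx}(\adla(y)\nu(dy)+\ad\sigma(dy))$, recognising that $\adla(y)\nu(dy)+\ad\sigma(dy)=\nor{\xix}\ad\stbef(dy)=\nor{\xix}\staft(dy)=\nor{\xix}\radons{\staft}{\mu}(y)\,\mu(dy)$ by \eqref{piw} applied to the reversed process together with the identification $\nor{\ad\xix}=\nor{\xix}$ from Theorem \ref{main}. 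The measure $\adla(y)\nu(dy)$ is carried by $\sEA$ (the active part for the reversed dynamics) and $\ad\sigma(dy)$ by $\acm$, and these two pieces are mutually singular, so splitting $\mu=\res{\mu}{\sEA}+\res{\mu}{\acm}$ and matching the parts supported on $\sEA$ and on $\acm$ separately delivers \eqref{ka1} and \eqref{ka2} respectively. The main obstacle I anticipate is not any single computation but the bookkeeping of which measure lives on which part of the boundary: I must use the fact that for the reversed process the active boundary is $\acm$ while $\act$ is passive, so that $\ad\sigma$ charges only $\acm$, in order to justify the clean separation of the $\res{\mu}{\sEA}$ and $\res{\mu}{\acm}$ contributions.
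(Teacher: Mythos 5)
Your proposal is correct and follows essentially the same route as the paper: absolute continuity of $\staft$ from \eqref{piq}, substitution of the densities $\radons{\mls x}{\mu}$ and $\radons{\staft}{\mu}$ into \eqref{ecce2} with cancellation of the common factor $\mu(dy)$ (justified, as you note, by uniqueness of densities $\mu$-a.e.) to obtain \eqref{eh}, and finally the identity $\adla(y)\nu(dy)+\ad\sigma(dy)=\nor{\xix}\radons{\staft}{\mu}(y)\,\mu(dy)$ split according to the disjoint supports $\sEA$ and $\acm$. The only cosmetic difference is that you reach this last identity directly from \eqref{piw} applied to the reversed process (using $\nor{\ad\xix}=\nor{\xix}$ and $\ad\stbef=\staft$), whereas the paper combines \eqref{ecce} with \eqref{eh}; the two derivations are equivalent.
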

\begin{remark}
If $\mu_x$ is absolutely continuous for every $x\in\sEA$ and $r$ is continuously differentiable
then by Proposition \ref{prop} the stationary measure $\nu$ is absolutely continuous, which simplifies most calculations.
\end{remark}

\begin{proof} It follows from   \eqref{piq} that the stationary distribution of $\AFT$ is absolutely continuous w.r.t. $\mu$, with  derivative given by:
\begin{align}
\radon{\staft}{\mu}(y)=\int_{x\in\sEA} \radon{\mls x}{\mu}(y)\stbef(dx)\label{nothing}.
\end{align}
By Theorem \ref{main}, equation \eqref{ecce2} and the fact that $\ad\stbef=\staft$ it follows that
\begin{align*}
\radon{\mls x}{\mu}(y)\mu(dy)\stbef(dx)
=\adml y{dx}\ad\stbef(dy)=
\adml y{dx}\staft(dy)=\radon{\staft}{\mu}(y)\adml y{dx}\mu(dy),
\end{align*}
implying \eqref{eh}. From \eqref{ecce} and \eqref{eh} it follows now that
\begin{align*}
\ad\la(y)\nu(dy)+\ad\sigma(dy)=\nor{\xix}\radon{\staft}{\mu}(y)\;\mu(dy),
\end{align*}
from which \eqref{ka1} and \eqref{ka2} can be deduced. 
\end{proof}

\begin{remark}
We have $\mls x\abscon \mu$ for example if  $\mls x(A)=\mu(A\cap B_x)/\mu(B_x)$ for some set $B_x$ with $\mu(B_x)>0$. Then the Radon-Nikodym derivative is given by
\begin{align*}
\radon{\mls x}{\mu}(y)=\frac{\ind{y\in B_x}}{\mu(B_x)}.
\end{align*}
For example, if $\mu$ is the Lebesgue-measure then $\mls x$ is the uniform distribution on $B_x$.
\end{remark}

\begin{remark}
Suppose that
$\AFT_i=g_{\BEF_i}(B_i)$, where $g_x:\sR^d\to\sE$ is  continuously  differentiable with $\det Dg_x\not=0$ (here $D$ is the Jacobian) and
 $B_1,B_2,\ldots$ are i.i.d. random variables with values in
$C=\{z\in\sR^d:g_x(z)\in\sE,\forall x\in\sE\}$ (we assume that $\sE$ is such that $C$ is non-empty) and an absolutely continuous distribution with density $f$. Then
\begin{align*}
\ml xA
&=\int_{C} \ind{g_x(y)\in A}f(y)\,dy
=\int_{g_x(C)} \frac{\ind{y\in A}f(g_x^{-1}(y))}{|\det Dg_x(y)|}\,dy
=\int_{A} \frac{\ind{y\in g_x(C)}f(g_x^{-1}(y))}{|\det Dg_x(y)|}\,dy,
\end{align*}
so that $\mls x\abscon \leb$ with density
\begin{align*}
\mls x'(y)=\ind{y\in g_x(C)}\frac{f(g_x^{-1}(y))}{|\det Dg_x(y)|}.
\end{align*}
Suppose for example that the frequent situation where $\AFT_i=\BEF_i+B_i$ is present, so that the jumps are random translations. Then $g_x(y)=x+y$, $C=\{z:z+x\in\sE,\forall x\in\sE\}$ and $\mls x$ is absolutely continuous with density
\begin{align*}
\mls x'(y)=\ind{y\in C+x}f(y-x).
\end{align*}
\end{remark}

According to  the classic literature on reversed Markov processes (see \cite{Naga,Nag2}) one  would expect that the generator $\ad{\gen}$ of $\ad X$ is the adjoint operator of $\gen$, that is
\begin{align*}
\int_{\sE}g(x)\gen f(x)\nu(dx)
 =\int_{\sEA}f(x)\ad\gen g(x)\nu(dx),
\end{align*}
for bounded functions $f,g$ in $\dom\gen\cap\dom{\ad\gen}$. This is indeed true and in the case of PDMPs this result takes the following form:

\begin{Theorem}
For $f,g\in\bacon$ we have
\begin{align}
&\int_{\sE}g(x)\gen f(x)\nu(dx)+\int_{\act}g(x)\qen f(x)\sigma(dx)
\nonumber\\&\quad=\int_{\sEA}f(x)\ad\gen g(x)\nu(dx)+\int_{\acm}f(x)\ad\qen g(x)\ad\sigma(dx).
\label{adju}
\end{align}
\end{Theorem}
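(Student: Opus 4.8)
The plan is to collapse both sides of \eqref{adju} onto the single stationarity identity \eqref{zero}, applied to the product $gf$. Two structural features of the reversal drive this. First, the reversed deterministic motion traverses the integral curves backwards, $x\mapsto\hh(x,-t)$, so that the reversed Lie derivative is simply $\ad\xen g=-\xen g$. Second, the joint jump--flux measure $\xix$ of \eqref{sh} is invariant under reversal: this is exactly Theorem \ref{main}, whose disintegrated form \eqref{ecce} allows every reversed jump term to be rewritten through the \emph{same} measure $\xix$.

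I would first dispose of the jump contributions. Splitting $\gen=\xen+\la\qen$ and recalling $\xix(dx,dy)=\ml x{dy}(\la(x)\nu(dx)+\sigma(dx))$ from \eqref{sh}, the two jump terms on the left of \eqref{adju} merge into
\[
J=\int g(x)\bkla{f(y)-f(x)}\,\xix(dx,dy).
\]
On the right, the reversed jump terms (over $\sEA$ against $\ad\la\,\nu$ and over $\acm$ against $\ad\sigma$, merged using $\nu(\act)=0$) read $\int f(x)(g(y)-g(x))\,\adml x{dy}(\ad\la(x)\nu(dx)+\ad\sigma(dx))$; applying \eqref{ecce} and relabelling the variables turns this into
\[
\ad J=\int f(y)\bkla{g(x)-g(y)}\,\xix(dx,dy).
\]
Subtracting, the bilinear term $g(x)f(y)$ cancels and I am left with
\[
J-\ad J=\int\bkla{(gf)(y)-(gf)(x)}\,\xix(dx,dy).
\]

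For the drift terms I would use $\ad\xen g=-\xen g$ together with the fact that $\nu$ gives no mass to the boundary (so that integrating against $\nu$ over $\sE$, over $\sEA$, or over $\sEE$ is the same); the Leibniz rule for the first order operator $\xen$ then gives
\[
\int_{\sE} g\,\xen f\,\nu(dx)-\int_{\sEA} f\,\ad\xen g\,\nu(dx)=\int_{\sEE}\bkla{g\,\xen f+f\,\xen g}\,\nu(dx)=\int_{\sEE}\xen(gf)\,\nu(dx).
\]
To finish, observe that $gf\in\bacon$, so \eqref{zero} may be applied to it; rewriting the jump part of \eqref{zero} through $\xix$ exactly as above, \eqref{zero} becomes $\int_{\sEE}\xen(gf)\,\nu(dx)+\int\bkla{(gf)(y)-(gf)(x)}\,\xix(dx,dy)=0$. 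This says precisely that the drift difference equals $\ad J-J$, which rearranges to \eqref{adju}.

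The step demanding the most care is the jump bookkeeping: one must keep straight which of $\sEE$, $\act$ and $\acm$ carries each of $\nu$, $\sigma$ and $\ad\sigma$, and apply \eqref{ecce} to the reversed kernel with the correct pairing of the before-- and after--jump variables. Once that is in place, the cancellation of the bilinear terms is what forces both sides to collapse onto the single relation \eqref{zero}.
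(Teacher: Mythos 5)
Your proof is correct and takes essentially the same route as the paper's: both decompose $\gen=\xen+\la\qen$, use the Leibniz rule for $\xen$, apply the stationarity identity \eqref{zero} to the product $fg$, and invoke Theorem \ref{main} via \eqref{ecce} to transfer the jump terms between the forward and reversed measures. Your symmetric arrangement --- showing that the difference of the two sides collapses onto \eqref{zero} --- is a tidy reorganization of the same computation, and it incidentally sidesteps the sign slips (the factor $\la(x)\nu(dx)-\sigma(dx)$ where $\la(x)\nu(dx)+\sigma(dx)$ is meant) that appear in the paper's displayed intermediate steps.
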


\begin{proof}
Using the representation $\gen f(x)=\xen  f(x)+\la(x)\qen f(x)$, it follows that
\begin{eqnarray}
&&\int_{\sE}g(x)\gen f(x)\nu(dx)+\int_{\act}g(x)\qen f(x)\sigma(dx)
\nonumber\nline=\int_{\sE}g(x)\xen  f(x)\nu(dx)+\int_{\sEA}g(x)\qen f(x)\beck{\la(x)\nu(dx)+\sigma(dx)}.\label{ggx}
\end{eqnarray}
Leibnitz's rule $\xen  (fg)=\xen  f\cdot g+f\cdot \xen  g$ and the definition of $\qen$ imply that the r.h.s. of \eqref{ggx} is equal to
\begin{eqnarray}
&&\int_{\sE}\xen  (fg)(x)\nu(dx)-\int_{\sE}f(x)\xen  g(x)\nu(dx)
\nonumber\nline+\int_{\sEA}\int_{\sE}(g(x)f(y)-g(x)f(x))\ml{x}{dy}\beck{\la(x)\nu(dx)-\sigma(dx)}.\label{ggy}
\end{eqnarray}
It follows from \eqref{zero} that the integrals w.r.t. $\nu$ of $\xen  (fg)(x)$  and $-\la(x)\qen (fg)(x)$ coincide. Hence \eqref{ggy} equals:
\begin{eqnarray*}
&&-\int_{\sE}\la(x)\qen (fg)(x)\nu(dx)-\int_{\sE}f(x)\xen  g(x)\nu(dx)
\nline+\int_{\sEA}\int_{\sE}(g(x)f(y)-g(x)f(x))\ml{x}{dy}\beck{\la(x)\nu(dx)-\sigma(dx)},
\end{eqnarray*}
which, again by the definition of $\qen$, produces:
\begin{eqnarray*}
-\int_{\sE}f(x)\xen  g(x)\nu(dx)+\int_{\sEA}\int_{\sE}(g(x)f(y)-g(y)f(y))\ml{x}{dy}\beck{\la(x)\nu(dx)-\sigma(dx)}.
\end{eqnarray*}
Since $\ml{x}{dy}\beck{\la(x)\nu(dx)-\sigma(dx)}=\adml{y}{dx}\beck{\adla(y)\nu(dy)-\ad\sigma(dy)}$  by Theorem \ref{main}, this is equal to
\begin{align*}
&-\int_{\sEA}f(x)\xen  g(x)\nu(dx)+\int_{\sE}\int_{\sEA}(g(x)f(y)-g(y)f(y))\adml{y}{dx}\beck{\adla(y)\nu(dy)-\ad\sigma(dy)}
\\&\quad=\int_{\sEA}f(x)\ad\gen g(x)\nu(dx)+\int_{\acm}f(x)\ad\qen g(x)\ad\sigma(dx).\qedhere
\end{align*}
\end{proof}

\section{Processes on the real line}
\label{onedi}
\subsection{Introduction}
In this section we assume that $X_t$ has values in $\sR$ and that $t\mapsto \hh(x,t)$ is strictly increasing, that is $\xen  f(x)=r(x)f'(x)$ with some locally Lipschitz-continuous function $r:\sE\to\sR$ with $r(x)>0$, $x\in\sEE$. The state space of $X_t$ is a real interval $[w,\gamma)$, $w<\gamma$, where it is allowed to have either $w=-\infty$ (then $\sE=(-\infty,\gamma)$) or $\gamma=\infty$ (then $\sE=[w,\infty)$)  or both (in which case $\sE=\sR$). We assume that $\tact(x)<\infty$ for some (and then all) $x\in\sE$, whenever $\gamma<\infty$, so that $\{\gamma\}$ can be reached in finite time (otherwise one could transform the state space to obtain the $\gamma=\infty$ case). Hence, the active boundary is empty if $\gamma=\infty$ and is equal to $\{\gamma\}$ else.

The following figure shows  typical sample paths for the case where  $\la(x)=1$ ($N_t$ is a Poisson process), $r(x)=4-x$ i.e. $\hh(x,t)=4-(4-x)e^{-t}$, $\gamma=2$ (so $\act=\{2\}$ and $\sE=(-\infty,2)$) and $\mu_x(dy)=e^{-y} dy$, i.e.the process jumps from $x$ to $x-Z$, where $Z$ is exponential with mean one.
\fig{.97}{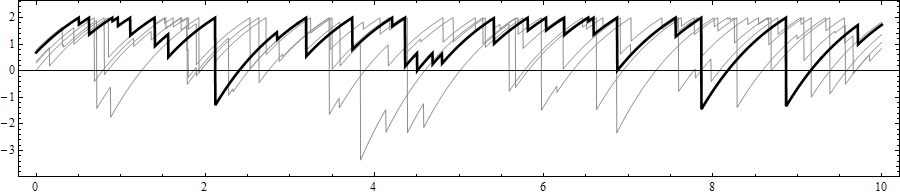}

Conditions for the existence of a stationary distribution $\nu$ can be found in \cite{Zheng, lastergo}.
The boundary measure $\sigma$ is determined by the single value $\sigma(\{\gamma\})$ which can be calculated as follows. According to \eqref{rice} we have
\begin{align*}
\sigma(\{\gamma\})&=\lim_{h\to 0}\frac{\nu(\{x:\tim x\gamma<h\})}{h}
=\lim_{h\to 0}\frac{1}{h}\int_{[w,\gamma]} \ind{\tim x\gamma<h} \nu(dx)
\\&=\lim_{h\to 0}\frac{1}{h}\int_0^h  r(\hh(\gamma,-s))\nu'(\hh(\gamma,-s))\,ds=r(\gamma)\nu'(\gamma).
\end{align*}
The following Theorem generalizes known formulas for the stationary density (see e.g. \cite[p.383]{Asmussen}, \cite[Theorem 2.1]{rene}, \cite[Theorem 1]{AL-ISO}, \cite[Proposition 4]{Bea}, c.f. \cite[Proposition 2.2]{lastrice}).

\begin{Theorem}\label{theo} The stationary distribution $\nu$ is absolutely continuous on $[w,\gamma)$. The density $\nu'$ of $\nu$ fulfills the integro-differential equation:
\begin{align}
r(x)\nu'(x)
&=\int_w^\gamma\la(z)\KK xz\nu'(z)\,dz+ \sigma(\{\gamma\})\KK x\gamma,\quad x\in(w,\gamma),
\label{qqq}
\end{align}
where $\KK xz=\ind{x\leq z}\ml z{[w,x)}-\ind{x>z}\ml z{[x,\gamma)}
=\ml z{[w,x)}-\ind{x>z}$.
\end{Theorem}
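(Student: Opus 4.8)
The plan is to prove the two assertions separately. Absolute continuity of $\nu$ on $[w,\gamma)$ I would obtain directly from Proposition \ref{prop}: the state space is a single one-dimensional component with $r(x)>0$ on the interior $\sEE=(w,\gamma)$, so the hypotheses $d(i)=1$ and $r\neq0$ hold and, under \cd{COC}, the proposition gives $\nu(dx)=\nu'(x)\,dx$ at once. The measure $\sigma$ then reduces to the single atom $\sigma(\{\gamma\})$ carried by the active boundary $\act=\{\gamma\}$ (and the corresponding term disappears when $\gamma=\infty$). The real content is the integro-differential equation \eqref{qqq}, which I would extract from the stationarity identity \eqref{zero} by testing it against a suitable family of functions.

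First I would fix a level $x\in(w,\gamma)$ and, for small $\veps>0$, take $f_\veps\colon\sEA\to[0,1]$ to be the continuous, piecewise linear function that is $0$ on $[w,x-\veps]$, rises linearly to $1$ on $[x-\veps,x]$, and equals $1$ on $[x,\gamma]$. Being Lipschitz and bounded, $f_\veps$ is absolutely continuous along the flow, so $f_\veps\in\bacon$ and \eqref{zero} applies to it. Since $\xen f_\veps(y)=r(y)f_\veps'(y)=\tfrac1\veps\,r(y)\ind{y\in[x-\veps,x]}$, the drift part $\int_{\sE}\xen f_\veps\,d\nu$ of \eqref{zero} equals $\tfrac1\veps\int_{x-\veps}^x r(y)\nu'(y)\,dy$, which by the Lebesgue differentiation theorem tends to $r(x)\nu'(x)$ for almost every $x$ (here $r\nu'\in L^1_{\mathrm{loc}}$ because $r$ is continuous and $\nu'$ locally integrable). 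This yields the left-hand side of \eqref{qqq}.

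Next I would take the limit in the jump and boundary terms. As $\veps\to0$ one has $f_\veps\to\ind{\cdot\geq x}$ pointwise on $\sEA$, hence $\qen f_\veps(y)\to\ml y{[x,\gamma)}-\ind{y\geq x}$ for every $y$. Because $|\qen f_\veps|\leq1$ and $\int_{\sE}\la\,d\nu<\infty$ by \CD{BB}{BB2}, dominated convergence gives
\begin{align*}
&\int_{\sE}\la(y)\qen f_\veps(y)\,\nu(dy)+\qen f_\veps(\gamma)\,\sigma(\{\gamma\})\\
&\qquad\longrightarrow\int_w^\gamma\la(y)\bkla{\ml y{[x,\gamma)}-\ind{y\geq x}}\nu'(y)\,dy-\ml\gamma{[w,x)}\,\sigma(\{\gamma\}).
\end{align*}
Substituting into \eqref{zero} and using $\ml y{[x,\gamma)}=1-\ml y{[w,x)}$ to rewrite $-(\ml y{[x,\gamma)}-\ind{y\geq x})=\ml y{[w,x)}-\ind{x>y}=\KK xy$, together with $-(\ml\gamma{[x,\gamma)}-1)=\ml\gamma{[w,x)}=\KK x\gamma$, the identity rearranges into exactly \eqref{qqq}, valid for almost every $x\in(w,\gamma)$.

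The hard part will be the drift term: the indicator $\ind{\cdot\geq x}$ is \emph{not} absolutely continuous along the flow, so it cannot be inserted into \eqref{zero} directly, and its regularization $f_\veps$ recovers $r(x)\nu'(x)$ only in the almost-everywhere (Lebesgue differentiation) sense. The remaining ingredients---membership $f_\veps\in\bacon$ and the two dominated-convergence passages---are routine once \CD{BB}{BB2} is invoked. To promote \eqref{qqq} from almost every $x$ to every $x\in(w,\gamma)$ one would fix the version of $\nu'$ that agrees with the (continuous) right-hand side.
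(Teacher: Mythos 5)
Your derivation of the integro-differential equation is essentially sound, but the first half of your proof has a genuine gap: you obtain absolute continuity of $\nu$ from Proposition \ref{prop}, which is only available under \cd{COC} ($\EX{\staft}{T_1}<\infty$), an assumption that Theorem \ref{theo} does not make. The paper does not need it: it keeps a \emph{general} test function $f\in\bacon$, writes $f(y)-f(z)=\int_z^y f'(u)\,du$, and applies Fubini twice to convert \eqref{zero} into the identity $\int_{[w,\gamma)} r(x)f'(x)\,\nu(dx)=\int_w^\gamma g(x)f'(x)\,dx$, where $g$ is exactly the right-hand side of \eqref{qqq}. Since $f'$ ranges over a measure-determining class (take $f(x)=\int_w^x h(u)\,du$ with $h$ bounded and compactly supported), this forces $r(x)\,\nu(dx)=g(x)\,dx$, and because $r>0$ on $\sEE$, absolute continuity of $\nu$ and equation \eqref{qqq} drop out simultaneously --- no Lebesgue differentiation, no a.e.\ exceptional set, and no Condition \coco{\ref{COC}}. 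In your argument the gap propagates: your treatment of the drift term already presupposes that $\nu'$ exists (you write $\tfrac1\veps\int_{x-\veps}^x r(y)\nu'(y)\,dy$), so without an unconditional proof of absolute continuity the limit computation has nothing to stand on.

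Two smaller points. Your mollification of $\ind{\cdot\geq x}$ and the two dominated-convergence passages (using \CD{BB}{BB2} and $|\qen f_\veps|\leq 1$) are correct, and in effect reproduce the paper's Fubini computation in a weak, pointwise form; this route is workable once absolute continuity is secured, at the cost of an a.e.\ statement that must then be upgraded. For that upgrade, your parenthetical ``(continuous)'' is wrong: $\ml z{[w,x)}$ is only left-continuous in $x$, and the right-hand side of \eqref{qqq} can jump wherever the measures $\mls z$ have atoms --- nothing in the theorem excludes atoms of the jump measures. The version-fixing still works, but for a different reason: the right-hand side depends on $\nu'$ only through integrals, so it is unchanged if $\nu'$ is modified on a Lebesgue null set, and one may simply \emph{define} $\nu'(x)$ to be the right-hand side divided by $r(x)$ for every $x\in(w,\gamma)$; this version then satisfies \eqref{qqq} everywhere.
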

\begin{proof}
 Suppose that $f\in\bacon$. Then $f$ is absolutely continuous and $f(y)-f(x)=\int_x^y f'(u)\,du$, with a measurable $f':\sEE\to\sR$. Hence, using Fubini's theorem,
\begin{eqnarray*}
\qen f(z)
&=&\int_{w}^\gamma \kla{f(y)-f(z)}\,\ml z{dy}=\int_{w}^\gamma \int_z^y f'(x)\,dx\,\ml z{dy}
\\
&=&\int_{w}^\gamma \int_{w}^\gamma \ind{z<x\leq y}\,\ml z{dy} f'(x)\,dx
-\int_{w}^\gamma \int_{w}^\gamma \ind{y<x<z}\,\ml z{dy}  f'(x)\,dx
\\
&=&\int_{w}^\gamma\bkla{\ind{x>z} \ml z{[x,\gamma)}
-\ind{x\leq z} \ml z{[w,x)}} f'(x)\,dx.
\end{eqnarray*}
It follows from \eqref{zero}, applying Fubini once again, that
\begin{eqnarray}
&&\int_{[w,\gamma)} r(x)f'(x)\nu(dx)
\,=\,-\int_{[w,\gamma)} \la(z)\qen f(z)\nu(dz)
-\sigma(\{\gamma\})\qen f(\gamma).
\nonumber
\nline=\int_{w}^\gamma\Big(
\int_{[x,\gamma)} \la(z)  \ml z{[w,x)}\nu(dz)
\nonumber\nline\quad-\int_{[w,x)} \la(z) \ml z{[x,\gamma)}\nu(dz)
+\sigma(\{\gamma\})\ml \gamma{[w,x)}
\Big) f'(x)\,dx.\label{l}
\end{eqnarray}
Hence $\nu$ is absolutely continuous with density $\nu'$ fulfilling \eqref{qqq}.
\end{proof}

\subsection{The reversed process} Given a PDMP on the real line and a fixed time $T>0$, we define $\ad X_t=X_{(T-t)-}$ as before. As in the general case, under the \cd{DD} this reversed process is a PDMP.
Its deterministic path is decreasing with  $\ad r(x)=-r(x)$ and the active boundary is $\{w\}$ if $w>-\infty$. In contrast to the higher dimensional setting, in the present situation the integro-differential equation \eqref{qqq}  for $\nu$ enables us to express $\adla$ more explicitly.

\begin{remark}\label{condone dim}
In practical applications one may use the following sufficient conditions to ensure that the conditions \CD{AA}{AA1}--\CD{DD}{DD1} hold.
\begin{enumerate}
\item $\lambda$ is continuous, bounded and bounded away from zero. This condition implies \CD{AA}{AA1}, \CD{AA}{AA2}, \CD{BB}{BB2}, \CD{BB}{BB3} and (\ref{COC}).
\item For all $x\in\ad\sE$,  $\mu_x(B)$ is absolutely continuous with density $\mlm x{y}$, which is a
continuous function of $x$. This implies \CD{AA}{AA4} and \CD{BB}{BB4}.
\item there exists a stationary distribution $\nu$ (which is necessarily absolutely continuous, see Proposition \ref{prop}) with density fulfilling $\nu'(x)\not=0$ for $x\in\sEE$; then \CD{BB}{BB1} is obvious and since
\begin{align}
\frac{d\staft(z)}{d\nu(z)}= \frac{\int_{\ad\sE}  \mlm x{z}\la(x)\nu'(x)\,dx}{\nu'(z)}.\label{cat}
\end{align}
it follows that also \CD{DD}{DD1} is implied, too.
\item there is no active boundary or there exists $\epsilon >0$ such that $\ml x{\tau(x)>\epsilon}=1$ for all $x\in \act$, implying \CD{BB}{BB5} as well as \CD{AA}{AA3} (along with (1), see \cite[(26.4), p. 60]{Davis}).
\end{enumerate} 
Condition \CD{DD}{DD2}, local integrability of $\adla$, has to be checked in each particular case. If for example $\mu_x'$ is bounded and $1/\nu'(z)$ is locally integrable, then \CD{DD}{DD2} is fulfilled (see equation \eqref{cat}).
\end{remark}

\begin{Theorem}\label{solar} If $\mls x$ is absolutely continuous for all $x\in\sE$ with density $\mu_x^\prime$ and $r$ is absolutely continuous then so is $\nu'$ (with derivative $\nu''$) and if $\nu'(x)\not=0$, then
\begin{eqnarray}
\ad\la(x)&=&\la(x)+r'(x)+r(x)\frac{\nu''(x)}{\nu'(x)},\quad x\in\sEA,\label{kdo}
\\
\admlm x{y}&=&\frac{\la(y)\nu'(y)}{\adla(x)\nu'(x)}\mlm y{x},\quad x\in\sE,\; y\in\sEA,\label{kdu}
\\
\adml x{\{\gamma\}}&=&\frac{\sigma(\{\gamma\})}{\adla(x)\nu'(x)}\mlm \gamma{x},\quad x\in\sEA.\label{kuh}
\end{eqnarray}

\end{Theorem}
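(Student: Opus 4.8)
The plan is to read off all three formulas from a single computation: differentiating the one–dimensional stationarity equation \eqref{qqq} and matching the result against the reversed intensity supplied by Proposition \ref{absolu}. The starting point is to specialise that proposition to $\mu=\leb$. Combining \eqref{ka1} (which gives $\adla(y)\nu'(y)=\nor{\xix}\staft'(y)$ on the interior) with the explicit Lebesgue density of $\staft$ obtained from \eqref{piq} when $\mls x$ is absolutely continuous, I get the key expression
\begin{align*}
\adla(x)\nu'(x)=\int_w^\gamma \la(z)\mlm z{x}\nu'(z)\,dz+\sigma(\{\gamma\})\mlm \gamma{x}.
\end{align*}
Thus $\adla$ is determined once I know $\nu'$, and proving \eqref{kdo} amounts to showing that the right-hand side above equals $\la(x)\nu'(x)+\bkla{r(x)\nu'(x)}'$.

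Before differentiating I would establish absolute continuity of $\nu'$. Writing $\KK xz=\mls z([w,x))-\ind{x>z}$ and $\mls z([w,x))=\int_w^x\mlm z{y}\,dy$, the right-hand side of \eqref{qqq} can, after a Fubini interchange, be put in the form $\int_w^x(\cdots)\,dy-\int_w^x\la(z)\nu'(z)\,dz+\sigma(\{\gamma\})\int_w^x\mlm\gamma{y}\,dy$, which is an absolutely continuous function of $x$. Hence $x\mapsto r(x)\nu'(x)$ is absolutely continuous; since $r$ is absolutely continuous and strictly positive on $\sEE$, its reciprocal is locally absolutely continuous, so $\nu'=(r\nu')/r$ is locally absolutely continuous. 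In particular $\nu''$ exists almost everywhere and the Leibniz rule $(r\nu')'=r'\nu'+r\nu''$ applies.

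The central step is to differentiate \eqref{qqq} in $x$ using the decomposition just obtained. Differentiating the first and third pieces reproduces exactly $\int_w^\gamma\la(z)\mlm z{x}\nu'(z)\,dz+\sigma(\{\gamma\})\mlm\gamma{x}$, i.e. $\adla(x)\nu'(x)$, while differentiating the moving-endpoint term $-\int_w^x\la(z)\nu'(z)\,dz$ produced by the $-\ind{x>z}$ part of $\KK xz$ yields precisely $-\la(x)\nu'(x)$. Equating with $(r\nu')'(x)=r'(x)\nu'(x)+r(x)\nu''(x)$ gives $\adla(x)\nu'(x)=\la(x)\nu'(x)+r'(x)\nu'(x)+r(x)\nu''(x)$, and dividing by $\nu'(x)\neq 0$ yields \eqref{kdo}. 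I expect this differentiation to be the main obstacle: one must handle the indicator/endpoint term carefully (it is exactly what generates the $-\la(x)\nu'(x)$ correction distinguishing $\adla$ from $\la$), and justify both the Fubini interchange and differentiation under the integral sign, which is licensed by \CD{BB}{BB2}, the absolute continuity of $\mls z$, and local integrability along the flow.

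Finally, \eqref{kdu} and \eqref{kuh} follow from \eqref{eh} with $\mu=\leb$, read as $\staft'(y)\adml y{dx}=\mlm x{y}\stbef(dx)$, once I split $\stbef$ into its absolutely continuous interior part $\tfrac{1}{\nor{\xix}}\la(x)\nu'(x)\,dx$ and the atom $\tfrac{1}{\nor{\xix}}\sigma(\{\gamma\})$ at $\gamma$ (the active boundary of the original process, a legitimate jump destination for the reversed process). Substituting $\nor{\xix}\staft'(y)=\adla(y)\nu'(y)$ from \eqref{ka1}, the interior part gives $\admlm y{x}=\tfrac{\la(x)\nu'(x)}{\adla(y)\nu'(y)}\mlm x{y}$, which is \eqref{kdu} after relabelling $x\leftrightarrow y$, and the atom at $\gamma$ gives $\adml y{\{\gamma\}}=\tfrac{\sigma(\{\gamma\})}{\adla(y)\nu'(y)}\mlm\gamma{y}$, which is \eqref{kuh}.
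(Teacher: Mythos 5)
Your proof is correct and takes essentially the same route as the paper: you derive \eqref{kdo} by matching the Lebesgue specialisation of Proposition \ref{absolu} (via \eqref{ka1} and \eqref{piq}) against the derivative of the stationarity equation \eqref{qqq}, with the endpoint term $-\ind{x>z}$ producing the $-\la(x)\nu'(x)$ correction, and you obtain \eqref{kdu} and \eqref{kuh} from \eqref{eh} after splitting $\stbef$ into its interior density $\la\nu'/\nor{\xix}$ and the atom $\sigma(\{\gamma\})/\nor{\xix}$ at the boundary, exactly as in the paper. Your extra care in justifying the absolute continuity of $\nu'$ and the differentiation under the integral merely makes explicit what the paper leaves implicit.
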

\begin{proof}
From Theorem \ref{theo} it follows that $\nu'$ is absolutely continuous
and by \eqref{ka1} and \eqref{piq} we have:
\begin{align}\label{lala}
\nu'(u)\bkla{\ad\la(u)-\la(u)}
&=\int_{\sE}\la(x)\mlm x{u}\nu'(x)\,dx+\int_{\act}\mlm x{u}\sigma(dx)-\la(u)\nu'(u).
\end{align}
Hence from \eqref{qqq} it follows that
\begin{align}
\nu'(u)\bkla{\ad\la(u)-\la(u)}
&=
\int_{[u,\gamma)}\la(x)\mlm x{u}\nu'(x)\,dx-\la(u)\nu'(u)\ml u{[u,\gamma)} \nonumber
\\&\quad +\int_{(-\infty,u)}\la(x)\mlm x{u}\nu'(x)\,dx-\la(u)\nu'(u)\ml u{(-\infty,u)}+\sigma(\{\gamma\})\mlm \gamma{u}
\nonumber\\&=(r(u)\nu'(u))'\nonumber
\end{align}
 which proves  \eqref{kdo}.
 The relation \eqref{kdu}
 follows from Proposition \ref{absolu} since
 \begin{align*}
\admlm x{y}=\frac{\pi_W^\prime(y)}{\pi_Q^\prime(x)}\mlm y{x}
=\frac{\pi_W^\prime(y)}{\pi_{W^\star}^\prime(x)}\mlm y{x}=\frac{\la(y)\nu'(y)}{\adla(x)\nu'(x)}\mlm y{x}.
 \end{align*}
 The proof of \eqref{kuh} is similar.\end{proof}

\begin{remark}
\label{remo}
It follows from \eqref{kdo} that
\begin{align*}
\ad\la(x)-\la(x)&=r(x)\bkla{\log(r(x)\nu'(x))}'
\end{align*}
Suppose that we observe the stationary process in state $x$. If $(\log(r(x)\nu'(x)))'$ is positive (negative) then  it is more likely (less likely)  to find a jump in the near past than to find a jump in the near future.

\end{remark}

Let assume that  the processes $X_t,\BEF_k,\AFT_k$ have been continued to the left. This can easily be achieved by setting $X_t$ equal to $\ad Y_{-t}$ for $t<0$, where $\ad Y$ is the time reversal of an independent version $\seq{Y}t$ of the PDMP.
Then $\AFT_{-1}$ denotes the state of $X_t$ just after the last jump before time $0$.

\begin{Corollary}
If $\mls x$ is absolutely continuous for all $x\in\sE$, then
\begin{eqnarray}
r(y)\nu'(y)\PX y{\BEF_1>x}=r(x)\nu'(x)\PX x{\AFT_{-1}<y},\quad y\leq x\in \sE.\label{piroschki}
\end{eqnarray}
\end{Corollary}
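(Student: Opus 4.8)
The plan is to derive \eqref{piroschki} from the duality relation \eqref{kdu} together with the explicit formulas for the embedded chains. The left-hand side $r(y)\nu'(y)\PX y{\BEF_1>x}$ concerns the forward process started just after a jump at $y$, looking at where it is just before the next jump; the right-hand side $r(x)\nu'(x)\PX x{\AFT_{-1}<y}$ concerns the reversed process. First I would write out $\PX y{\BEF_1>x}$ explicitly. Since the flow is strictly increasing with $\tim yx$ the deterministic travel time, $\BEF_1>x$ means the process starting at $y$ travels past $x$ before its first jump, an event whose probability is the survival factor $\La{\tim yx}{y}=\exp(-\int_0^{\tim yx}\la(\hh(y,u))\,du)$. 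Using the substitution rule \eqref{u2} with $|r(u)|=r(u)$ this exponent becomes $\int_y^x \la(u)/r(u)\,du$, so that $\PX y{\BEF_1>x}=\exp(-\int_y^x \la(u)/r(u)\,du)$ for $y\le x$.

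Next I would interpret the right-hand side. Under the stationary continuation to the left, $\AFT_{-1}$ is the post-jump location of the last jump before time $0$, which is exactly the pre-jump-type quantity for the reversed process started from $x$. By the construction $\ad X_t=X_{(T-t)-}$, the event $\{\AFT_{-1}<y\}$ for the forward process corresponds to the reversed process started at $x$ surviving (without a voluntary jump) as it flows from $x$ down to $y$; the reversed flow is decreasing with $\ad r=-r$ and intensity $\adla$. Thus $\PX x{\AFT_{-1}<y}$ equals the reversed survival probability $\exp(-\int_y^x \adla(u)/r(u)\,du)$, again for $y\le x$, where the $r(u)$ in the denominator comes from the reversed substitution rule (the sign from $\ad r=-r$ being absorbed by reversing the limits of integration).

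The key computational step is then to insert the generator formula \eqref{kdo}, namely $\adla(u)-\la(u)=r(u)(\log(r(u)\nu'(u)))'$, which is the form recorded in Remark \ref{remo}. Dividing by $r(u)$ gives
\begin{align*}
\frac{\adla(u)-\la(u)}{r(u)}=\bkla{\log(r(u)\nu'(u))}',
\end{align*}
and integrating from $y$ to $x$ yields $\int_y^x (\adla(u)-\la(u))/r(u)\,du=\log(r(x)\nu'(x))-\log(r(y)\nu'(y))$. Combining the two survival probabilities,
\begin{align*}
\frac{\PX x{\AFT_{-1}<y}}{\PX y{\BEF_1>x}}
=\exp\Bkla{-\int_y^x \frac{\adla(u)-\la(u)}{r(u)}\,du}
=\frac{r(y)\nu'(y)}{r(x)\nu'(x)},
\end{align*}
which rearranges to exactly \eqref{piroschki}.

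The main obstacle I anticipate is justifying the probabilistic interpretation of $\PX x{\AFT_{-1}<y}$ as a reversed-process survival probability, i.e. carefully matching the two-sided stationary continuation described before the corollary to the reversed PDMP of Proposition \ref{wrc} and Theorem \ref{main}. One must verify that the absolute continuity of $\mls x$ (the sole hypothesis) ensures there is no atom at $x$ and no boundary complication, so that the event $\{\AFT_{-1}<y\}$ is genuinely the clean survival event for the reversed flow with intensity $\adla$, and that $\nu'$ is differentiable where needed so that \eqref{kdo} applies along the whole segment $[y,x]$. Once the identification of both sides as exponential survival factors is in place, the algebra via \eqref{kdo} is routine.
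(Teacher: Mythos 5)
Your proposal is correct and matches the paper's own proof: the paper likewise integrates \eqref{kdo} in the form $\int_y^x(\adla(u)-\la(u))/r(u)\,du=\log\bkla{r(x)\nu'(x)/(r(y)\nu'(y))}$ and then identifies, via the substitution rule \eqref{u2} and $\ad r=-r$, the two exponentials $e^{-\int_y^x\la(u)/r(u)\,du}$ and $e^{-\int_y^x\adla(u)/r(u)\,du}$ with $\PX y{\BEF_1>x}$ and $\PX x{\AFT_{-1}<y}$ respectively. The only difference is presentational (you interpret the survival probabilities first and then apply \eqref{kdo}, while the paper integrates first and interprets at the end), so no further comparison is needed.
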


\begin{proof}
Dividing \eqref{kdo} by $\nu'(u)$ and integrating yields
\begin{eqnarray*}
\int_y^x\frac{\ad\la(u)-\la(u)}{r(u)}\,du=\int_y^x\frac{(r(u)\nu'(u))'}{r(u)\nu'(u)}\,du=\log\bkla{\frac{r(x)\nu'(x)}{r(y)\nu'(y)}}.
\end{eqnarray*}
Using $\ad r(x)=-r(x)$, we obtain:
\begin{eqnarray*}
r(y)\nu'(y) e^{-\int_y^x \frac{\la(u)}{r(u)}\,du}=r(x)\nu'(x)e^{-\int_x^y\frac{\ad\la(u)}{\ad r(u)}\,du}.
\end{eqnarray*}
By \eqref{u2} this is equivalent to
\begin{eqnarray*}
r(y)\nu'(y) e^{-\int_0^{\tim xy} \la(\hh(y,s))\,ds}=r(x)\nu'(x)e^{-\int_0^{\adtim xy}\adla(\hh(x,s))\,ds},
\end{eqnarray*}
which is equivalent to \eqref{piroschki}.
\end{proof}

We assume in the following examples that the conditions \CD{AA}{AA1}--\CD{BB}{BB4} and \cd{DD} are satisfied. See Remark \ref{condone dim} for sufficient conditions.

\subsection{Renewal age process}
\figc{.97}{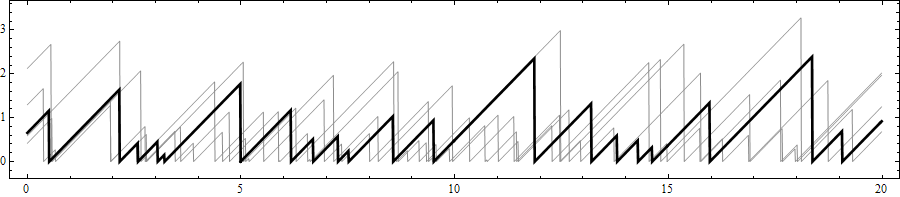}{Forward recurrence time $X_t$ with $F$ being a half-normal distribution with mean $1$.}
Consider the classic renewal process $N_t$ with renewal times $T_1,T_2,\ldots$. If the distribution of $T_1$ is absolutely continuous with distribution $F(x)$ and density $f(x)$ then the backward recurrence time $X_t=t-T_{N_t}$ is a PDMP with $r(x)=1$ and $\la(x)$ being equal to the hazard rate $f(x)/(1-F(x))$. The associated jump measure is, independently of $x$, concentrated in zero and the active boundary is empty (hence $w=0$ and $\gamma=\infty$).
It follows from \eqref{qqq} that, provided that $\EE{T_1}$ is finite, the density of the stationary distribution fulfills:
\begin{align*}
\nu'(x)
=\int_{[x,\infty)} \frac{f(z)}{1-F(z)}\nu'(z)\,dz.
\end{align*}
This leads to the well known fact that $\nu$ is equal to the equilibrium or stationary excess distribution of $F$:
\begin{align*}
\nu'(x)=\frac{1-F(x)}{\EE{T_1}}.
\end{align*}
The reversed process $\ad X_t$ is the forward recurrence time process of the same renewal process. Applying Theorem \ref{main} we see that  $\ad\la(y)=0$ for $y\in\sEA$ and that
\begin{align*}
f(x)\,dx
=\adml 0{dx}\ad\sigma(\{0\})
\end{align*}
from which, upon integration, $\ad\sigma(\{0\})=1/\EE{T_1}$ follows (the average number of visits to $0$) and hence $\adml 0{dx}=f(x)\,dx$, which is not surprising: the upward jumps of the reversed process are just i.i.d. jumps with distribution $F$. Hence, for the trivial case of the renewal process our formulas yield the correct results.

\subsection{Generalized TCP window size process}\label{gw}

\figc{.97}{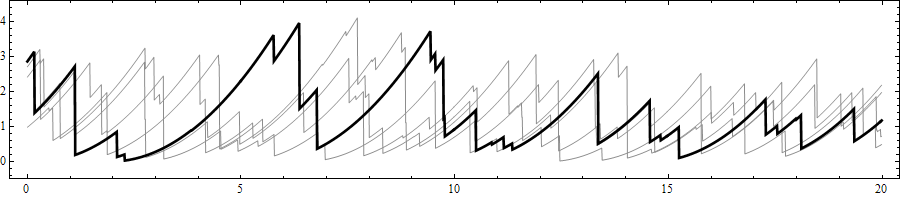}{Generalized TCP window size process with $\alpha=1/2$, $\beta=1$, $\gamma=1$.}

In \cite{AL-ISO} a generalized model for the TCP window size has been studied (see also  \cite{altmanmap, dumas, chafaii, AL-TRANS,bkim, bert1}). We assume that $X_t$ is a process with  $\la(x)=\la x^\beta$, $r(x)=rx^\alpha$, $\beta>\alpha-1$ and $\mu_x((0,y])=(y/x)^\gamma$, which means that $\AFT_{k}=U_K^{1/\gamma}\cdot \BEF_k$, where the $U_k$ are independent random variables with uniform distribution on $(0,1)$. In other words, at jump times the process is multiplied by the random number $U_k^{1/\gamma}$.

Using Theorem \ref{theo} it can be shown  that under the condition $\beta>\alpha-1$ a unique stationary distribution $\nu$ exists and
\begin{eqnarray*}
\nu'(x)=Cx^{\gamma-\alpha}\exp\bkla{-\frac{\la}{r(\beta-\alpha+1)}x^{\beta-\alpha+1}}
\end{eqnarray*}
(see  (27) in \cite{AL-ISO} for the normalizing constant $C$).
Surprisingly, it follows from
\begin{eqnarray*}
\frac{\nu''(x)}{\nu'(x)}=\frac{\gamma-\alpha}{x}-\frac{\la x^{\beta-\alpha}}{r}
\end{eqnarray*}
that the jump intensity of the reversed process is of the same form as $\la$:
\begin{eqnarray*}
\ad\la(x)=\la(x)+r'(x)+r(x)\frac{\nu''(x)}{\nu'(x)}=r\gamma x^{\alpha-1}.
\end{eqnarray*}
As long as $\beta>0$ the reversed process $\ad X$ cannot reach $0$ in finite time. If $\beta\leq 0$ then the process could reach $0$, but since $\alpha-1<\beta\leq -1$, it follows that $\adla$ is not integrable on $[0,\veps)$ for any $\veps$: the process will almost surely (and by construction even surely) jump before it reaches $0$.

The jump measure of the reversed process is of a different form than the one for the original process, namely
\begin{eqnarray}
\admlm x{y}=\frac{\la(y)\mlm y{x}\nu'(y)}{\adla(x)\nu'(x)}
=\frac{ \la}{r}y^{\beta-\alpha} \exp\Bkla{\frac{\la}{r(\beta-\alpha+1)}\bkla{x^{\beta-\alpha+1}-y^{\beta-\alpha+1}}},
\end{eqnarray}
for $y\geq x$, so that we obtain a Weibull-type distribution
\begin{eqnarray}
\adml x{(x,y]}=\exp\Bkla{\frac{\la}{r(\beta-\alpha+1)}\bkla{x^{\beta-\alpha+1}-y^{\beta-\alpha+1}}}.
\end{eqnarray}
If $\alpha=\beta$ then the upward jumps of $\ad X$ follow an exponential distribution with mean $r/\la$ and the reversed process has additive jumps whereas the original process has multiplicative jumps (this has been observed in \cite{gideon}).

\subsection{Jumps independent of the current state}
\figc{.97}{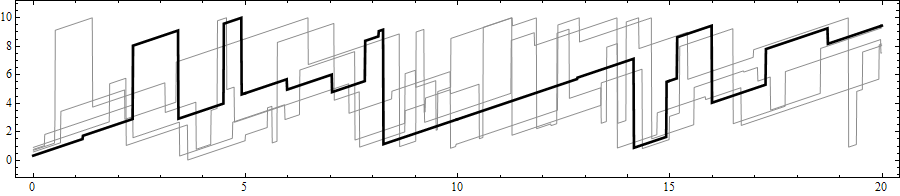}{$r(x)=1$, $\la(x)=1$, $\mu_x(A)=\leb(A\cap [0,10])$, $\gamma=10$.}
Suppose that $\ml z{A}=\mu(A)$ for all $z\in\sE$ and some absolutely continuous  measure $\mu$ on $[w,\gamma)$. Then it follows from \eqref{qqq} that
\begin{align*}
r(x)\nu'(x)
&=\int_w^\gamma\la(z)\KK xz\nu'(z)\,dz+ \sigma(\{\gamma\})\KK x\gamma,\quad x\in(w,\gamma),
\end{align*}
where $\KK xz=\mu([w,x))-\ind{x>z}$, implying that
\begin{align*}
r(x)\nu'(x)
&=\nor\xix\mu([w,x))-\int_w^x\la(z)\nu'(z)\,dz,
\end{align*}
where $\nor\xix=\int_w^\gamma\la(z)\nu'(z)\,dz+ \sigma(\{\gamma\})$. It follows that
\begin{align}\label{lloef}
r(x)\nu''(x)+\kla{r'(x)+\la(x)}\nu'(x)
&=\nor\xix\mu'(x)
\end{align}
and hence, using $\nu'(w)=0$,
\begin{align*}
\nu'(x)=\nor\xix\frac{1}{r(x)}
\int_w^x e^{-\int_u^x \frac{\la(y)}{r(y)}\,dy}\,d\mu(u).
\end{align*}
Then it follows from \eqref{kdo} and \eqref{lloef} that
\begin{align*}
\adla(x)=\nor\xix\frac{\mu'(x)}{\nu'(x)},
\end{align*}
which is nothing else than \eqref{ka1} since $\staft=\mu$ here. Note that $\nor\xix$ can be found from the normalizing condition $\nu(\sE)=1$.

\subsection{Reflection}

Suppose that $X_t$ is a real valued PDMP with decreasing paths with the additional feature that once the process reaches $0$ the process stays in $0$ for an exponentially distributed time with mean $1/\la(0)$. We assume that $r(x)<0$ for all $x>0$, so $0$ can actually be reached. Moreover the time to reach zero has finite mean. Technically this process is not a PDMP on the real line. Instead the process can be modelled as a PDMP with two outer states. We let $\sEE_1=(0,\varsigma]$ with active boundary $\act_1=\{0\}$ and  introduce a second component $\sEE_2$ which includes the point $\{0\}$. Also let $r(x)=0$ for all $x\in\sEE_2$. Then the state space of $X_t$ is $\{(x,i)|x\in\sEE_i,i\in{1,2}\}$. Once $X_t$ hits $(0,1)$ it jumps to $(0,2)$ and stays there until it jumps back to $\sEE_1$. However, we avoid this cumbersome notation and just write $\sE=[0,\varsigma]$ and $\act=\{0\}$ and thereby allow $\sE\cap\act\not=\emptyset$. Note that the reversed process $\ad X_t$, after jumping to $0$, stays there for an exponentially distributed sojourn time until it starts decreasing again.

\begin{Theorem}\label{theo2} The stationary distribution $\nu$ is absolutely continuous on the interval $(0,w)$. The density $\nu'$ of $\nu$ fulfills the integro-differential equation:
\begin{align}
r(x)\nu'(x)
&=\int_0^\varsigma \la(z)\MM xz\nu'(z)\,dz+ \la(0)\nu(\{0\})\MM x0,\quad x\in(0,\varsigma),
\label{qqq2}
\end{align}
where $\MM xz=\ind{x\leq z} \ml z{[0,x)}-\ind{x>z} \ml z{[x,\varsigma)}
=\ml z{[0,x)}-\ind{x>z}$.
\end{Theorem}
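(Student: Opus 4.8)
The plan is to follow the proof of Theorem \ref{theo} almost verbatim, the only genuinely new ingredient being the bookkeeping at the reflecting point $0$. I first fix $f\in\bacon$; since such an $f$ is absolutely continuous along the flow and one-dimensional, I may write $f(y)-f(z)=\int_z^y f'(u)\,du$ and compute, by Fubini exactly as in Theorem \ref{theo},
\begin{align*}
\qen f(z)=\int_0^\varsigma\bkla{\ind{x>z}\ml z{[x,\varsigma)}-\ind{x\le z}\ml z{[0,x)}}f'(x)\,dx.
\end{align*}
This is the same kernel as before with $[0,\varsigma)$ in place of $[w,\gamma)$, and it is what produces the function $\MM xz=\ml z{[0,x)}-\ind{x>z}$.

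The decisive step is the correct form of the identity \eqref{zero} for this model. Because the process is held at $0$ for an exponential time, the stationary law $\nu$ carries an atom at $0$, and in the collapsed description $\sE=[0,\varsigma]$, $\act=\{0\}$ the reflection is not an instantaneous forced jump but a voluntary jump out of the holding state at rate $\la(0)$ with measure $\mu_0$. Consequently the boundary flux is encoded entirely by this atom: the genuine forced transition from the component $\sEE_1$ into the holding state leaves $f$ unchanged, so its $\qen f$ contribution is zero, while the atom contributes $\la(0)\nu(\{0\})\qen f(0)$ to $\int_\sE\gen f\,d\nu$; equivalently this is the balance $\sigma(\{0\})=\la(0)\nu(\{0\})$. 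Using $\xen f(x)=r(x)f'(x)$ on $(0,\varsigma)$ and $r\equiv 0$ at the holding point, \eqref{zero} therefore reads
\begin{align*}
\int_{(0,\varsigma)}r(x)f'(x)\,\nu(dx)+\int_{(0,\varsigma)}\la(z)\qen f(z)\,\nu(dz)+\la(0)\nu(\{0\})\qen f(0)=0.
\end{align*}

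Substituting the Fubini expression for $\qen f$ and interchanging the order of integration once more, I collect the coefficient of $f'(x)$, obtaining $\int_{(0,\varsigma)}r(x)f'(x)\,\nu(dx)=\int_0^\varsigma G(x)f'(x)\,dx$, where for $z\ge x$ the kernel contributes $\ml z{[0,x)}$ and for $z<x$ it contributes $-\ml z{[x,\varsigma)}$, so that
\begin{align*}
G(x)=\int_{[x,\varsigma)}\la(z)\ml z{[0,x)}\,\nu(dz)-\int_{(0,x)}\la(z)\ml z{[x,\varsigma)}\,\nu(dz)-\la(0)\nu(\{0\})\ml 0{[x,\varsigma)}.
\end{align*}
Since the right-hand side is an integral against Lebesgue measure and $f'$ ranges over a rich enough class of functions, the measures $r\,d\nu$ and $G\,d\leb$ agree on $(0,\varsigma)$; as $r(x)<0$ is nowhere zero there, $\nu$ is absolutely continuous on $(0,\varsigma)$ with density $\nu'$. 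Writing $\nu(dz)=\nu'(z)\,dz$ in $G$ and recognising the combination as $\int_0^\varsigma\la(z)\MM xz\nu'(z)\,dz+\la(0)\nu(\{0\})\MM x0$ through $\MM x0=-\ml 0{[x,\varsigma)}$ for $x>0$ then yields \eqref{qqq2}.

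I expect the only real obstacle to be the second paragraph: pinning down, rigorously and without double counting, how the exponential holding at $0$ enters \eqref{zero} through the auxiliary two-component model, and confirming that the reflecting mechanism contributes exactly the single term $\la(0)\nu(\{0\})\qen f(0)$ with no separate $\sigma$-term surviving. Once this is settled, the remainder is the same Fubini-and-rich-class argument already used for Theorem \ref{theo}.
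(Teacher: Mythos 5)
Your proposal is correct and follows essentially the same route as the paper: the same Fubini computation of $\qen f(z)$, the same application of \eqref{zero} with the forced jump from $\sEE_1$ into the holding state contributing nothing (since $f$ is unchanged, i.e.\ $\qen f$ vanishes at the active boundary) and the atom contributing exactly $\la(0)\nu(\{0\})\qen f(0)$, followed by the same interchange of integration and identification of the coefficient of $f'(x)$. The bookkeeping you flag as the only obstacle is precisely what the paper settles, equally tersely, via the observation $\qen f(\gamma)=0$ in the two-component description.
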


\begin{proof} As in the proof of Theorem \ref{theo} let
 $f\in\bacon$. By Fubini's theorem
\begin{eqnarray*}
\qen f(z)
&=&\int_{0}^\varsigma \kla{f(y)-f(z)}\,\ml z{dy}=\int_{0}^\varsigma \int_z^y f'(x)\,dx\,\ml z{dy}
\\
&=&\int_{0}^\varsigma \int_{0}^\varsigma \ind{z<x\leq y}\,\ml z{dy} f'(x)\,dx
-\int_{0}^\varsigma \int_{0}^\varsigma \ind{y<x<z}\,\ml z{dy}  f'(x)\,dx
\\
&=&\int_{0}^\varsigma\bkla{\ind{x>z} \ml z{[x,\varsigma)}
-\ind{x\leq z} \ml z{[0,x)}} f'(x)\,dx.
\end{eqnarray*}
It follows from \eqref{zero} and $\qen f(\gamma)=0$ that
\begin{align*}
\int_{(0,\varsigma)} r(x)f'(x)\nu(dx)
&=-\int_{(0,\varsigma)} \la(z)\qen f(z)\nu(dz)
\\&=-\int_{(0,\varsigma)} \Bkla{\int_{0}^\varsigma\la(z)\MM xz \nu(dz)+\la(0)\MM x0\nu(\{0\})}f'(x)\,\,dx,
\end{align*}
so $\nu$ is absolutely continuous and  \eqref{qqq2} holds.
\end{proof}

The parameters of the reversed process are given in Theorem \ref{solar} applied for $-X_t$ and $\gamma=0$, $w=-\varsigma$.

\subsection{Workload of the single server queue M/G/1} \label{risk}
\figc{.97}{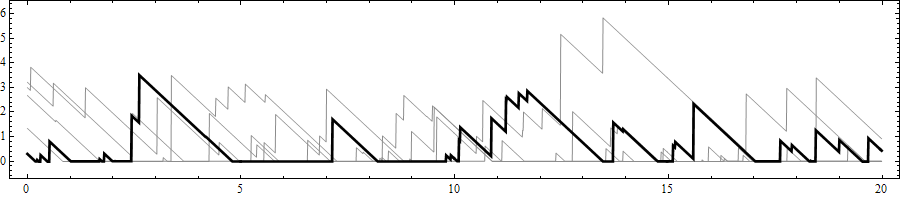}{Risk process}
The workload  (also virtual waiting time) process of a Markovian single server queue is defined as the amount of work left at the server at time $t$.
This is a special case of the reflected process with $\varsigma=\infty$, where $r(x)=-1$, $\la(x)=\la$ is constant and $\ml z{(x,\infty]}=1-F(x-z)$ for $z\leq x$, where $F$ is a probability distribution on $(0,\infty)$. We assume that $\varrho=\la\int_0^\infty (1-F(u))\,du<1$, so that it can be shown that unique stationary distribution $\nu$ exists. It then follows from \eqref{qqq2} that
\begin{eqnarray}
\nu'(x)
=\la\int_{0}^x (1-F(x-z))\nu'(z)\,dz+ \la\nu(\{0\})(1-F(x)).\label{polkin}
\end{eqnarray}
Integrating yields
$
1-\nu(\{0\})
=\varrho(1-\nu(\{0\}))+ \nu(\{0\})\varrho
$
with $\varrho=\la/\int_0^\infty(1-F(u))\,du$, from which $\nu(\{0\})=1-\varrho$ follows.

The renewal type equation \eqref{polkin} can be solved at least for numerical purposes in form of an infinite series of convolutions (the Pollaczek-Khintchine formula in the queueing context). Alternatively one can take Laplace transforms and find an equivalent relation for the transform.

Provided that $F$ is absolutely continuous with density function $f$, Theorem \ref{solar} implies:
\begin{eqnarray*}
\ad\la(x)&=&\la+\frac{\nu''(x)}{\nu'(x)},\quad x>0,\label{pus}
\\
\admlm x{y}&=&\frac{\la  \nu'(y)}{\adla(x)\nu'(x)}f(x-y),\quad x \geq 0,\;y<x,
\nonumber\\
\adml x{\{0\}}&=&\frac{\la (1-\varrho)}{\adla(x)\nu'(x)}f(x),\quad x>0.\nonumber
\end{eqnarray*}

\begin{remark} It is interesting to look for conditions to obtain a constant jump rate for the reversed process. We have $\ad\la=\la+C$ for some constant $C$, iff $\nu''(x)/\nu'(x)=C$, that is, if $\nu'(x)=De^{Cx}$ is an exponential density  with additional mass at zero. Then
\begin{align*}
\admlm x{y}=\frac{\la  \nu'(y)}{\la\nu'(x)+\nu''(x)}f(y-x)
=\frac{\la }{\la+C}e^{C(y-x)}f(y-x)
\end{align*}
depends only on $(y-x)$. If we interpret $g(y)=\frac{\la }{(\la+C)}e^{Cy}f(y)$ as the density of the workload of an M/G/1 queue, then it follows from the fact that $\nu$ is a mixture of an exponential distribution with a mass at zero (see e.g. \cite{Asmussen}, Theorem 9.1) that $g(x)$ is also exponential with mean $1/\beta$ and $\beta=\ad\la-C=\la$.
\end{remark}

\begin{remark}
Specializing Remark \ref{remo}, if we ask ourself whether it is more probable, if we observe $X_t$ in steady state, to have a claim in the near future than to have a claim in the near past, \eqref{pus} shows that
 $\adla(x)>\la$ if $\log \nu'$ is increasing  at $x$ and
 $\adla(x)<\la$ if $\log \nu'$ is decreasing at $x$ (no matter which claim size distribution is present).
\end{remark}


\bibliographystyle{plain}

\begin{thebibliography}{10}

\bibitem{HT}
H.~Albrecher and S.~Thonhauser.
\newblock Optimality results for dividend problems in insurance.
\newblock {\em Rev. R. Acad. Cien. Serie A. Mat.}, 103(2), 2009.

\bibitem{altmanmap}
E.~Altman, K.~Avrachenkov, A.~Kherani, and B.~Prabhu.
\newblock {Performance Analysis and Stochastic Stability of Congestion Control
  Protocols}.
\newblock Technical Report RR-5262, INRIA, Sophia-Antipolis, France, 2004.

\bibitem{Asmussen}
S.~Asmussen.
\newblock {\em {Applied probability and queues}}.
\newblock Applications of Mathematics 51. Springer, 2nd edition, 2003.

\bibitem{bkim}
F.~Baccelli, K.~B. Kim, and D.R. McDonald.
\newblock {Equilibria of a class of transport equations arising in congestion
  control.}
\newblock {\em Queueing Syst.}, 55(1):1--8, 2007.

\bibitem{barpar}
S.K. Bar-Lev, M.~Parlar, and D.~Perry.
\newblock {On the EOQ model with inventory-level-dependent demand rate and
  random yield.}
\newblock {\em Oper. Res. Lett.}, 16(3):167--176, 1994.



\bibitem{rene}
R.~Bekker, S.C. Borst, O.J. Boxma, and O.~Kella.
\newblock {Queues with workload-dependent arrival and service rates.}
\newblock {\em Queueing Syst.}, 46(3-4):537--556, 2004.

\bibitem{bill}
P.~Billingsley. 
\newblock{\em Convergence of probability measures.}
\newblock Wiley, 1968.

\bibitem{bono}
K.~Borovkov and A.~Novikov.
\newblock {On a piece-wise deterministic Markov process model.}
\newblock {\em Stat. Probab. Lett.}, 53(4):421--428, 2001.

\bibitem{BV}
K.~Borovkov and D.~Vere-Jones.
\newblock {Explicit formulae for stationary distributions of stress release
  processes.}
\newblock {\em J. Appl. Probab.}, 37:315--321, 2000.

\bibitem{borovlast}
K.~Borovkov and G.~Last.
\newblock{On level crossings for a general class of
piecewise-deterministic Markov processes.}
\newblock{\em : Adv. Appl. Probab.}, 40(3):815--834, 2008. 

\bibitem{onoff}
O.~Boxma, H.~Kaspi, O.~Kella, and D.~Perry.
\newblock {On/off storage systems with state-dependent input, output, and
  switching rates.}
\newblock {\em Probab. Eng. Inf. Sci.}, 19(1):1--14, 2005.

\bibitem{Bea}
O.~Boxma, D.~Perry, W.~Stadje, and S.~Zacks.
\newblock {A Markovian growth-collapse model.}
\newblock {\em Adv. Appl. Probab.}, 38(1):221--243, 2006.

\bibitem{Brockwell}
P.J. Brockwell.
\newblock {Stationary distributions for dams with additive input and content
  dependent release rate}.
\newblock {\em Adv. Appl. Probab.}, 9:645--663, 1977.

\bibitem{BrowneSigman}
S.~Browne and K.~Sigman.
\newblock {Work-modulated queues with applications to storage processes.}
\newblock {\em J. Appl. Probab.}, 29:699--712, 1992.

\bibitem{Cinlar1}
E.~\c{C}inlar and M.~Pinsky.
\newblock {A stochastic integral in storage theory}.
\newblock {\em Z. Wahrscheinlichkeitstheor. Verw. Geb.}, 17:227--240, 1971.

\bibitem{chafaii}
D.~Chafa{\"\i}, F.~Malrieu, and K.~Paroux.
\newblock {On the long time behavior of the TCP window size process.}
\newblock {\em Stoch. Proc. Appl.}, 120(8):1518--1534, 2010.

\bibitem{chungwalsh}
K.L. Chung and J.B. Walsh.
\newblock {To reverse a Markov process.}
\newblock {\em Acta Math.}, 123:225--251, 1969.

\bibitem{Costa}
O.L.V. Costa.
\newblock {Stationary distributions of piecewise-deterministic Markov
  processes}.
\newblock {\em J. Appl. Prob.}, 27:60--73, 1990.

\bibitem{DE}
A.~Dassios and P.~Embrechts.
\newblock {Martingales and insurance risk}.
\newblock {\em Commun. Stat., Stochastic Models}, 5(2):181--217, 1989.

\bibitem{dassjang}
A.~Dassios and J.~Jang.
\newblock {The distribution of the interval between events of a Cox process
  with shot noise intensity.}
\newblock {\em Journal of Applied Mathematics and Stochastic Analysis}, 2008.

\bibitem{Davis0}
M.H.A. Davis.
\newblock {Piecewise deterministic Markov Processes: A General Class of
  Non-diffusion Stochastic Models}.
\newblock {\em J. Roy. Stat. Soc. B}, 46:253--388, 1984.

\bibitem{Davis}
M.H.A. Davis.
\newblock {\em {Markov Models and Optimization.}}, volume~49 of {\em
  {Monographs on Statistics and Applied Probability}}.
\newblock {London: Chapman \& Hall}, 1993.

\bibitem{DC}
F.~Dufour and O.L.V. Costa.
\newblock {Stability of piecewise-deterministic Markov processes}.
\newblock {\em Siam J. Control Optim.}, 37(5):1483--1502, 1999.

\bibitem{dumas}
V.~Dumas, F.~Guillemin, and Ph. Robert.
\newblock {A Markovian analysis of additive-increase, multiplicative-decrease
  (AIMD) algorithms}.
\newblock {\em Adv. Appl. Probab.}, 34(1):85--111, 2002.

\bibitem{EMS}
P.~Embrechts and H.~Schmidli.
\newblock {Ruin estimation for a general insurance risk model.}
\newblock {\em Adv. Appl. Probab.}, 26(2):404--422, 1994.

\bibitem{fagg}
A.~Faggionato, D.~Gabrielli, and M.~Ribezzi~Crivellari.
\newblock {Non-equilibrium thermodynamics of piecewise deterministic Markov
  processes.}
\newblock {\em J. Stat. Phys.}, 137(2):259--304, 2009.

\bibitem{HarRes}
J.M. Harrison and S.I. Resnick.
\newblock {The stationary distribution and first exit probabilities of a
  storage process with general release rule.}
\newblock {\em Math. Oper. Res.}, 1:347--358, 1976.

\bibitem{Harris2}
J.M. Harrison and S.I. Resnick.
\newblock {The recurrence classification of risk and storage processes}.
\newblock {\em Math. Oper. Res.}, 1978.

\bibitem{hartman}
P.~Hartman.
\newblock {\em {Ordinary differential equations. 2nd ed. Reprint.}}
\newblock {Boston - Basel- Stuttgart: Birkh\"auser. XV},
  1982.

\bibitem{jacobsen}
M.~Jacobsen.
\newblock {\em {Point process theory and applications. Marked point and
  picewise deterministic processes.}}
\newblock {Probability and Its Applications. Boston: Birkh\"auser.} 2006.

\bibitem{JAPR}
J.~Jacod and P. Protter.
\newblock {Time reversal on L\'evy processes.}
\newblock {\em Ann. Probab.}, 16(2):620--641, 1988.

\bibitem{jask}
J.~Jacod and A.V. Skorohod.
\newblock {Jumping filtrations and martingales with finite variation.}
\newblock {Az\'ema, Jacques (ed.) et al., S\'eminaire de Probabilit\'es XXVIII.
  Berlin: Springer-Verlag. Lect. Notes Math. 1583, 21-35}, 1994.

\bibitem{Kellybook}
F.P. Kelly.
\newblock {\em {Reversibility and Stochastic Networks}}.
\newblock 1979.

\bibitem{kolmo1}
A.~Kolmogoroff.
\newblock {Zur Theorie der Markoffschen Ketten.}
\newblock {\em Math. Ann.}, 112:155--160, 1935.

\bibitem{kolmo2}
A.~Kolmogoroff.
\newblock {Zur Umkehrbarkeit der statistischen Naturgesetze.}
\newblock {\em Math. Ann.}, 113:766--772, 1936.

\bibitem{lastergo}
G.~Last.
\newblock {Ergodicity properties of stress release, repairable system and
  workload models.}
\newblock {\em Adv. Appl. Probab.}, 36(2):471--498, 2004.

\bibitem{lastrice}
G.~Last and K.~Borovkov.
\newblock {On level crossings for a general class of piecewise-deterministic
  Markov processes.}
\newblock {\em Adv. Appl. Probab.}, 40(3):815--834, 2008.

\bibitem{LastSzekli}
G.~Last and R.~Szekli.
\newblock {Stochastic comparison of repairable systems.}
\newblock {\em J. Appl. Probab.}, 35:348--370, 1998.

\bibitem{AL-TRANS}
J.S.H. van Leeuwaarden and A.H. L\"opker.
\newblock {Transient moments of the TCP window size process.}
\newblock {\em J. Appl. Probab.}, 45(1):163--175, 2008.

\bibitem{AL-ISO}
J.S.H. van Leeuwaarden, A.H. L\"opker, and T.J. Ott.
\newblock {TCP and iso-stationary transformations.}
\newblock {\em Queueing Syst.}, 63(1-4):459--475, 2009.


\bibitem{AL-WOLFGANG}
A.H. L\"opker and W. Stadje.
\newblock {Hitting times and the running maximum of Markovian growth collapse
  processes.}
\newblock {\em J. Appl. Probab.}, 48(2):295--312, 2011.

\bibitem{bert1}
K.~Maulik and B.~Zwart.
\newblock {Tail asymptotics for exponential functionals of L\'evy processes.}
\newblock {\em Stoch. Proc. Appl.}, 116(2):156--177, 2006.

\bibitem{Naga}
M.~Nagasawa.
\newblock {Time reversions of Markov processes.}
\newblock {\em Nagoya Math. J.}, 24:177--204, 1964.

\bibitem{Nag2}
M.~Nagasawa.
\newblock {Time reversal of Markov processes and relativistic quantum theory.}
\newblock {\em Chaos Solitons Fractals}, 8(11):1711--1772, 1997.

\bibitem{norris}
J.R. Norris.
\newblock {\em {Markov chains. Reprint.}}
\newblock {Cambridge Series in Statistical and Probabilistic Mathematics, 2.
  Cambridge: Cambridge Univ. Press}, 1998.

\bibitem{ogata}
Y.~Ogata and D.~Vere-Jones.
\newblock {Inference for earthquake models: A self-correcting model.}
\newblock {\em Stoch. Proc. Appl.}, 17:337--347, 1984.

\bibitem{palmrol}
Z.~Palmowski and T.~Rolski.
\newblock {A technique for exponential change of measure for Markov processes.}
\newblock {\em Bernoulli}, 8(6):767--785, 2002.

\bibitem{Rolski}
T.~Rolski, H.~Schmidli, V.~Schmidt, and J.~Teugels.
\newblock {\em {Stochastic processes for insurance and finance}}.
\newblock Chichester: John Wiley \& Sons, 1999.

\bibitem{schal2}
{Sch\"al, M.}
\newblock {Sprungprozesse}.
\newblock {\em Lecture Notes, University of Bonn}, 1997.

\bibitem{schal1}
{Sch\"al, M.}
\newblock {On piecewise deterministic Markov control processes: Control of
  jumps and of risk processes in insurance.}
\newblock {\em Insur. Math. Econ.}, 22(1):75--91, 1998.

\bibitem{schroedinger}
E.~Schr\"odinger.
\newblock {\"Uber die Umkehrung der Naturgesetze.}
\newblock 1931.

\bibitem{tanaka89}
H.~Tanaka.
\newblock {Time reversal of random walks in one dimension.}
\newblock {\em Tokyo J. Math.}, 12(1):159--174, 1989.


\bibitem{Ver}
D.~Vere-Jones.
\newblock {On the variance properties of stress release models.}
\newblock {\em Austral. J. Statist.}, 30A:123--135, 1988.

\bibitem{walsh}
J.B. Walsh.
\newblock {Time reversal and the completion of Markov processes.}
\newblock {\em Invent. Math.}, 10:57--81, 1970.

\bibitem{gideon}
G.~Weiss.
\newblock {Time-reversibility of linear stochastic processes.}
\newblock {\em J. Appl. Probab.}, 12:831--836, 1975.

\bibitem{wobst}
R.~Wobst.
\newblock {On jump processes with drift.}
\newblock {\em Diss. Math.}, 1983.

\bibitem{Yeh}
J.~Yeh.
\newblock {\em {Lectures on real analysis.}}
\newblock {Singapore: World Scientific}, 2000.

\bibitem{Zheng}
X.~Zheng.
\newblock {Ergodic theorems for stress release processes.}
\newblock {\em Stoch. Proc. Appl.}, 37:239--258, 1991.

\end{thebibliography}

\end{document}